% stdPosets_v20.tex
% Poset structures in Boij--S\"oderberg theory
% Christine Berkesch, Dan Erman, Manoj Kummini, Steven V Sam
\documentclass[12pt]{amsart}
\usepackage{amssymb}
\usepackage{lscape}
\usepackage{tikz}
\usepackage{fullpage}
\usepackage{lmodern}
\usepackage[T1]{fontenc}
\usepackage{paralist}
\usepackage{color}
\usepackage{tabmac}
\usepackage{amscd}
\usepackage[all]{xy}
\usepackage[normalem]{ulem}
\usepackage[colorlinks,pagebackref=true,pdftex]{hyperref}
\usepackage[alphabetic,abbrev,backrefs,lite]{amsrefs}

\makeatletter

\@addtoreset{equation}{section}
\def\theequation{\thesection.\@arabic \c@equation}

\def\theenumi{\@roman\c@enumi}

\def\@citecolor{blue}
\def\@linkcolor{blue}
\def\@urlcolor{blue}

\makeatother

\newtheorem{lemma}[equation]{Lemma}
\newtheorem{theorem}[equation]{Theorem}

\newtheorem{prop}[equation]{Proposition}

\newtheorem{claim*}{Claim}
\newtheorem{thm}[equation]{Theorem}

\theoremstyle{definition}
\newtheorem{rmk}[equation]{Remark}
\newenvironment{remark}[1][]{\begin{rmk}[#1] \pushQED{\qed}}{\popQED \end{rmk}}
\newtheorem{eg}[equation]{Example}
\newenvironment{example}[1][]{\begin{eg}[#1] \pushQED{\qed}}{\popQED \end{eg}}
\newtheorem{defn}[equation]{Definition}
\newenvironment{definition}[1][]{\begin{defn}[#1]\pushQED{\qed}}{\popQED \end{defn}}
\newtheorem{con}[equation]{Construction}
\newenvironment{construction}[1][]{\begin{con}[#1]\pushQED{\qed}}{\popQED \end{con}}

% Commands
\def\<{\langle}
\def\>{\rangle}

\newcommand{\ch}{\operatorname{char}}
\newcommand{\codim}{\operatorname{codim}}

\newcommand{\coker}{\operatorname{coker}}

\newcommand{\Ext}{\operatorname{Ext}} %done

\newcommand{\Hom}{\operatorname{Hom}}
\newcommand{\sheafHom}{\mathcal{H}om}

\newcommand{\Supp}{\operatorname{Supp}}

\newcommand{\Hs}{\mathrm{H}}

\newcommand{\EE}{\mathcal{E}}
\newcommand{\FF}{\mathcal{F}}

\newcommand{\GG}{\mathcal{G}}
\newcommand{\KK}{\mathcal{K}}
\newcommand{\cL}{\mathcal{L}}

\newcommand{\cO}{{\mathcal O}}
\newcommand{\PP}{\mathbb{P}}
\newcommand{\cQ}{\mathcal{Q}}
\newcommand{\Sc}{\mathbf{S}}
\newcommand{\ZZ}{\mathbb{Z}}
\newcommand{\NN}{\mathbb{N}}
\newcommand{\RR}{\mathbb{R}}
\newcommand{\QQ}{\mathbb{Q}}

\newcommand{\GL}{{\bf GL}}
\DeclareMathOperator{\Kos}{Kos}
\newcommand{\Sym}{\operatorname{Sym}}

\newcommand{\defi}[1]{{\upshape\bfseries #1}}

\DeclareMathOperator{\tate}{T}
\newcommand{\minus}{\ensuremath{\!\smallsetminus\!}}

\newcommand{\excise}[1]{}%{$\star$\textsc{#1}$\star$}

\newsavebox{\explicit}
%%%%%%%%%%%%%%%%%%%%%%%%%%%%%%%%%%%%%%%%
\title{Poset structures in Boij--S\"oderberg theory}%%%%%%%%%%%%%%%
%%%%%%%%%%%%%%%%%%%%%%%%%%%%%%%%%%%%%%%%

\author{Christine Berkesch}
\address{Institut Mittag-Leffler \\ Aurav\"agen 17 \\ SE-182 60
  Djursholm, Sweden 
  \hfill \quad \linebreak
\indent Department of Mathematics \\ Stockholm University \\
SE-106 91 Stockholm, Sweden}
\email{cberkesc@math.su.se}

\author{Daniel Erman}
\address{Department of Mathematics \\ Stanford University \\
Stanford, CA 94305}
\email{derman@math.stanford.edu}

\author{Manoj Kummini}
\address{Department of Mathematics \\ Purdue University \\
West Lafayette, IN 47907}
\curraddr{Chennai Mathematical Institute, Siruseri, Tamilnadu 603103.
India}
\email{mkummini@cmi.ac.in}

\author{Steven V Sam}
\address{Department of Mathematics \\ Massachusetts Institute of
Technology \\
Cambridge, MA 02139}
\email{ssam@math.mit.edu}

\thanks{ The first author was partially supported by NSF Grants DMS
  0901123 and OISE 0964985.  The second author was partially supported
  by an NDSEG fellowship and NSF Award No. 1003997.  The fourth author
  was supported by an NSF graduate research fellowship and an NDSEG
  fellowship.}

\date{}
%\subjclass[2010]{}
%\keywords{}

%%%%%%%%%%%%%%%%%%%%%%%%%%%%%%%%%%%%%%%
\begin{document}%%%%%%%%%%%%%%%%%%%%%%%%%%%%%%
%%%%%%%%%%%%%%%%%%%%%%%%%%%%%%%%%%%%%%%

%%%%%%%%%%%%%%%%%%%%%%%%%%%%%%%%%%%%%%%
\begin{abstract}%%%%%%%%%%%%%%%%%%%%%%%%%%%%%%%
%%%%%%%%%%%%%%%%%%%%%%%%%%%%%%%%%%%%%%%
%
  Boij--S\"oderberg theory is the study of two cones: the cone of Betti diagrams of 
  standard graded minimal free resolutions over a polynomial ring and
  the cone of cohomology tables of coherent sheaves over projective
  space.
We provide a new interpretation of these partial orders in terms of the
existence of nonzero homomorphisms, 
for both the general and equivariant constructions.
These results provide new insights into the families of modules and sheaves 
at the heart of Boij--S\"oderberg theory: 
Cohen--Macaulay modules with pure resolutions and supernatural sheaves. 
In addition, they suggest the naturality of these partial orders and
provide tools for extending Boij--S\"oderberg theory 
to other graded rings and projective varieties.
%
%%%%%%%%%%%%%%%%%%%%%%%%%%%%%%%%%%%%%%%
\end{abstract}%%%%%%%%%%%%%%%%%%%%%%%%%%%%%%%%
%%%%%%%%%%%%%%%%%%%%%%%%%%%%%%%%%%%%%%%
\maketitle
\vspace{-.5cm}
%%%%%%%%%%%%%%%%%%%%%%%%%%%%%%%%%%%%%%%
\section{Introduction}%%%%%%%%%%%%%%%%%%%%%%%%%%%%
%%%%%%%%%%%%%%%%%%%%%%%%%%%%%%%%%%%%%%%
\label{sec:intro}

Boij--S\"oderberg theory is the study of the 
cone of Betti diagrams over the standard graded polynomial ring 
$S=\Bbbk[x_1,\dots, x_n]$ and -- dually -- the 
cone of cohomology tables of coherent sheaves on $\PP^{n-1}_\Bbbk$, 
where $\Bbbk$ is a field.  The extremal rays of these
cones correspond to special modules and sheaves: 
Cohen--Macaulay modules with pure resolutions (Definition~\ref{def:pure:res}) 
and supernatural sheaves (Definition~\ref{def:supernatural}), respectively.  
Each set of extremal rays carries a partial order $\preceq$
(Definitions~\ref{defn:partial:deg} and \ref{defn:partial:root}) that
induces a simplicial decomposition of the corresponding cone.

Each partial order $\preceq$ is defined in terms of certain combinatorial
data associated to these special modules and sheaves. 
For a module with a pure resolution, this data is a degree sequence, and for
a supernatural sheaf, this data is a root sequence. 
Our main results reinterpret these partial orders $\preceq$ 
in terms of the existence of nonzero homomorphisms 
between Cohen--Macaulay modules with pure resolutions 
and between supernatural sheaves.

\begin{thm}\label{thm:poset:deg:main}
Let $\rho_d$ and $\rho_{d'}$ be extremal rays of the cone of Betti diagrams
for $S$ corresponding to Cohen--Macaulay modules with pure resolutions of
types $d$ and $d'$, respectively. 
Then $\rho_d \preceq \rho_{d'}$ if and only if there
exist Cohen--Macaulay modules $M$ and $M'$ with pure resolutions of types
$d$ and $d'$, respectively, with $\Hom_S(M',M)_{\leq 0}\ne 0$.
\end{thm}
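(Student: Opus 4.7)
The plan is to establish the equivalence by separate arguments for the two directions, since they have very different flavors. For the forward implication (existence of a nonzero $\phi$ implies $\rho_d \preceq \rho_{d'}$), my approach is to lift $\phi$ to a chain map $\Phi\colon F'_\bullet \to F_\bullet$ of minimal free resolutions and extract the poset inequalities from the degrees of the matrix entries of each $\Phi_i$. Writing $e = \deg \phi \leq 0$, each entry of $\Phi_i$ is a homogeneous form of degree $d'_i - d_i + e$, so non-vanishing of $\Phi_i$ immediately forces $d_i \leq d'_i + e \leq d'_i$. The crux of this direction is therefore to argue that $\Phi_i$ is nonzero for every $0 \leq i \leq c'$: non-vanishing of $\phi$ gives $\Phi_0 \neq 0$, and the strict positivity of the degree gaps $d_{i+1} - d_i$ in a pure resolution, together with minimality, should restrict the allowable chain homotopies so that a vanishing $\Phi_i$ propagates backwards to $\Phi_0 = 0$, a contradiction. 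The codimension inequality $c \geq c'$ would be obtained by applying the same argument past the end of $F_\bullet$: if $c' > c$, then $\Phi_{c+1}$ is automatically zero, and propagation back to $i = 0$ again gives $\Phi_0 = 0$.

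For the reverse implication, I would construct canonical modules $M_d$ and $M_{d'}$ of the prescribed types using the equivariant pure resolution machinery (e.g., the Eisenbud--Fl\o ystad--Weyman construction). There, the minimal generators and syzygies of $M_d$ are described by Schur functors $\mathbf{S}_\lambda(\Bbbk^n)$ with partitions $\lambda$ determined by $d$. The poset relation $d \preceq d'$ should correspond to a combinatorial containment at the level of partitions, producing a natural $\GL_n$-equivariant map between the appropriate Schur modules. This map extends through the explicit resolution to a homomorphism $M_{d'} \to M_d$ of internal degree $d_0 - d'_0 \leq 0$, giving the required element of $\Hom_S(M_{d'}, M_d)_{\leq 0}$. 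To handle codimension drops and avoid characteristic restrictions, I would reduce first to covering relations in $\preceq$ and build the maps step by step, possibly by mapping-cone or short-exact-sequence constructions, then compose to get the general case.

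The principal obstacle is the reverse direction, and specifically the verification that the natural equivariant map one writes down is actually nonzero in the correct homological and internal degree. A naive equivariant map could vanish for purely representation-theoretic reasons (wrong isotypic component), so identifying the right map and proving non-vanishing---uniformly over fields of arbitrary characteristic---will require the bulk of the work. A secondary technical point in the forward direction is making the propagation argument precise: showing that $\Phi_0 \neq 0$ implies $\Phi_i \neq 0$ in each relevant homological degree depends subtly on the purity of both resolutions and on minimality, and must be set up to preclude cancellation via chain homotopy.
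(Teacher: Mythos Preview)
Your outline correctly isolates the hard direction and its main obstacle (non-vanishing of the constructed map), but the construction you propose will not prove the theorem over an arbitrary field. The Eisenbud--Fl\o ystad--Weyman modules require $\ch(\Bbbk)=0$, and no reduction to covering relations removes that restriction: the Schur-functor and Pieri-map machinery is simply unavailable in positive characteristic. (What you sketch is close in spirit to the paper's proof of the separate equivariant Theorem~\ref{thm:equivariant:deg}, not of Theorem~\ref{thm:poset:deg:main}.) For Theorem~\ref{thm:poset:deg:main} the paper instead modifies the characteristic-free Eisenbud--Schreyer pushforward: one replaces the factors $\PP^{d_i-d_{i-1}}$ by a product $\PP^{1\times r}$ with $r$ chosen so that pure resolutions of \emph{both} types $d$ and $d'$ arise as pushforwards of twisted Koszul complexes along the \emph{same} projection $\PP^{n-1}\times\PP^{1\times r}\to\PP^{n-1}$. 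A map between the Koszul complexes is then multiplication by a single monomial, and after shifting $d'$ so that $d_j=d'_j$ for some $j$, non-vanishing of the induced chain map is verified by an explicit cohomology calculation in homological degree $j$. Even in characteristic zero your plan has a further gap: composing nonzero maps along a chain of covering relations can yield zero. The paper's equivariant argument handles this by arranging each step to be \emph{surjective} in the fixed homological degree $j$ where $d_j=d'_j$, so the composite is visibly not null-homotopic; your proposal provides no mechanism of this kind.

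For the implication $\Hom\ne 0\Rightarrow d\preceq d'$, the backward propagation you describe does not work as stated: from $\Phi_i=0$ one deduces only that $\Phi_{i-1}$ factors through a syzygy module of $M'$, which neither forces $\Phi_{i-1}=0$ nor produces a null-homotopy (extending a map off a syzygy submodule of $F'_{i-2}$ would require injectivity of $F_{i-1}$). The paper instead propagates \emph{forward}, which needs only projectivity, and then applies $\Hom_S(-,S(-n))$; Cohen--Macaulayness of $M$ and $M'$ turns the dualized complexes into resolutions, and a second forward propagation on the dual side reaches $\Phi_0$. This duality step is exactly where the CM hypothesis on $M'$ enters, so it is the heart of that direction rather than a secondary technicality.
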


\begin{thm}\label{thm:poset:root:main}
Let $\rho_f$ and $\rho_{f'}$ be extremal rays of the cone of cohomology
tables for $\PP^{n-1}$ corresponding to supernatural sheaves of types $f$
and $f'$, respectively. 
Then $\rho_f\preceq \rho_{f'}$ if and only if there exist
supernatural sheaves $\EE$ and $\EE'$ of types $f$ and $f'$, respectively,
with $\Hom_{\PP^{n-1}}(\EE',\EE)\ne 0$.
\end{thm}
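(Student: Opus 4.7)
This statement is the sheaf analogue of Theorem~\ref{thm:poset:deg:main}, and I expect the proof to split into the two implications with a similar pattern.

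For the direction from a nonzero morphism to the poset inequality -- suppose $\varphi\colon \EE'\to \EE$ is nonzero with $\EE,\EE'$ supernatural of types $f,f'$, and we seek $f_i\leq f_i'$ for all $i$ -- the guiding principle is the rigidity of supernatural cohomology tables. For $\EE$ of type $f$, the unique nonzero row of $H^{\ast}(\EE(d))$ at any twist $d\notin\{f_j\}$ is $\rho_f(d):=\#\{j\colon f_j>d\}$, and $f\preceq f'$ is equivalent to $\rho_f(d)\leq \rho_{f'}(d)$ for all $d$.  The plan is to study the image $\FF=\im\varphi\subseteq\EE$, viewing it both as a quotient of $\EE'$ and as a subsheaf of $\EE$.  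Combining the constraints imposed by the cohomology of $\EE$ on its subsheaves and by the cohomology of $\EE'$ on its quotients -- all pinned down by the single-row nature of supernatural tables -- should yield the inequalities $f_i\leq f_i'$ coordinate by coordinate, perhaps by showing that a violation $f_i>f_i'$ at some index forces an incompatible cohomological row at a carefully chosen twist $d$ with $f_i'\leq d<f_i$.

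For the direction from the poset inequality to the existence of a nonzero map, the plan is to reduce to ``elementary moves'' in the root-sequence poset -- steps that lower one coordinate by one, preserving strict decrease -- and realize each by an explicit morphism.  A natural framework is the Eisenbud--Schreyer realization of supernatural sheaves as push-forwards along a generic finite projection $\pi\colon Y\to \PP^{n-1}$, where $Y$ is a product of projective spaces (or a flag variety) and the sheaf upstairs is a line bundle of suitable multi-degree: an elementary move in the root sequence corresponds to multiplication by a section of a line bundle on $Y$, which pushes forward to a nonzero morphism of the corresponding supernatural sheaves on $\PP^{n-1}$.  Composing such morphisms along a chain from $f'$ down to $f$ yields the required map.

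The main obstacle lies in this second direction: supernatural sheaves of every root sequence exist, but producing two specific sheaves with a visibly nonzero morphism requires a consistent choice of representatives across the chain of elementary moves, and verifying non-vanishing of the push-forward map at each step is delicate.  A cleaner alternative is to bypass the explicit construction by transporting Theorem~\ref{thm:poset:deg:main} to the sheaf side via a BGG-type correspondence between Cohen--Macaulay modules with pure resolutions and supernatural sheaves; under such a correspondence Hom spaces in the appropriate graded degree match up, and Theorem~\ref{thm:poset:root:main} becomes a formal consequence of its module-side analogue.
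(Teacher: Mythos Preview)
Your plan correctly splits into the two implications, but both halves need adjustment relative to what actually works and what the paper does.

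\textbf{Reverse implication.} Your image-sheaf strategy is not developed enough to constitute a proof, and I do not see how to complete it. Writing $\FF=\im\varphi$ gives short exact sequences $0\to\ker\varphi\to\EE'\to\FF\to 0$ and $0\to\FF\to\EE\to\EE/\FF\to 0$, but the long exact sequences in cohomology involve $\Hs^\ast(\ker\varphi(d))$ and $\Hs^\ast((\EE/\FF)(d))$, over which you have no control. In particular, the single-row structure of $\Hs^\ast(\EE(d))$ and $\Hs^\ast(\EE'(d))$ does not propagate to $\FF$, so the hoped-for contradiction at a twist $d$ with $f_i'\le d<f_i$ does not materialize. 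The paper instead passes to Tate resolutions over the exterior algebra $\Lambda$: a nonzero $\varphi$ induces a map $\tate(\EE')\to\tate(\EE)$ that is not null-homotopic, and because $\Lambda$ is graded-local and self-injective, any such map of minimal acyclic complexes must be nonzero in \emph{every} cohomological degree. Since the generators of $\tate(\EE)^j$ and $\tate(\EE')^j$ each sit in a single internal degree (this is exactly the supernatural condition), the nonvanishing in each $j$ forces the degree inequalities that translate to $f\preceq f'$.

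\textbf{Forward implication.} Your first plan is morally correct but unnecessarily elaborate, and your worry about non-vanishing of the push-forward is misplaced. The paper realizes $\EE$ and $\EE'$ \emph{simultaneously} as $\pi_\ast\cO(-f-\mathbf 1)$ and $\pi_\ast\cO(-f'-\mathbf 1)$ for the \emph{same} finite morphism $\pi\colon\PP^{1\times(n-1)}\to\PP^{n-1}$ (after reducing to $\ell(f)=\ell(f')=n-1$). Since $f\preceq f'$ means $f'-f\ge 0$, the line bundle $\cO(f'-f)$ has a nonzero section, giving a nonzero map $\cO(-f'-\mathbf 1)\to\cO(-f-\mathbf 1)$; because $\pi$ is finite, $\pi_\ast$ is faithful, so the push-forward map is automatically nonzero. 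No chain of elementary moves or delicate verification is needed. The unequal-length case is handled by embedding $\PP^{1\times s}\hookrightarrow\PP^{1\times(n-1)}$ compatibly with $\PP^s\hookrightarrow\PP^{n-1}$ and arguing similarly.

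Your second plan, transporting Theorem~\ref{thm:poset:deg:main} via a BGG-type correspondence, is speculative: the Boij--S\"oderberg duality between pure resolutions and supernatural sheaves is a numerical pairing, not an equivalence of categories that identifies Hom-spaces, so this route would require substantial additional work (and the paper does not take it).
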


Though the statements of these two theorems are quite parallel,
Theorem~\ref{thm:poset:deg:main} is far more subtle than
Theorem~\ref{thm:poset:root:main}.  Theorem~\ref{thm:poset:root:main}
follows nearly directly from the Eisenbud--Schreyer pushforward
construction of supernatural sheaves, but without modification, 
it is not clear how to compare the modules constructed 
in~\cite[\S5]{EiScConjOfBS07}.

We illustrate this via an example.  Let $n=3$, $d=(0,2,3,5)$,
$d'=(0,3,9,10)$, and $M$ and $M'$ be finite length modules with
pure resolutions of types $d$ and $d'$, as constructed
in~\cite[\S5]{EiScConjOfBS07}.  
We know of no method to produce a nonzero element of $\Hom(M,M')_{\leq 0}$, 
even in this specific case.
The difficulty here stems from differences in the constructions of $M$ and
$M'$: the module $M$ is constructed by pushing forward a complex of
projective dimension $5$ along $\PP^2\times (\PP^1)^2\to \PP^2$,
whereas $M'$ is constructed by pushing forward a complex of
projective dimension $10$ along $\PP^2\times \PP^2\times \PP^5\to\PP^2$.  
Thus, the construction of~\cite[\S5]{EiScConjOfBS07} does not
even suggest that Theorem~\ref{thm:poset:deg:main} ought to be true.

Our motivation for conjecturing the statement of 
Theorem~\ref{thm:poset:deg:main} -- and the first key idea behind its proof -- 
is based on a flexible version of the Eisenbud--Schreyer construction of 
pure resolutions. This is Construction~\ref{modif:es} below, 
and we show that the basic results of~\cite[\S5]{EiScConjOfBS07} can be 
adapted to this construction. This extension enables us to use a single 
projection map to simultaneously produce modules $N$ and $N'$ with pure 
resolutions of types $d$ and $d'$.  In the case under consideration, 
we construct both $N$ and $N'$ by pushing forward complexes 
of projective dimension $10$ along the projection map 
$\PP^2\times (\PP^1)^7\to \PP^2$.\footnote{We note that $M\ne N$ 
and $M'\ne N'$ in this example.}

We may then produce elements of $\Hom(N,N')_{\leq 0}$ by working with 
the complexes on the source $\PP^2\times (\PP^1)^7$ of the projection map.  
However, finding such a nonzero element poses a second technical challenge 
in the proof of Theorem~\ref{thm:poset:deg:main}.  This requires an explicit 
and somewhat delicate computation involving the pushforward of a morphism 
of complexes along the projection $\PP^2\times (\PP^1)^7\to \PP^2$. 
This computation is carried out in the proof of 
Theorem~\ref{thm:deg:after:reduction}, thus providing a new understanding 
of how certain modules with pure resolutions are related. 

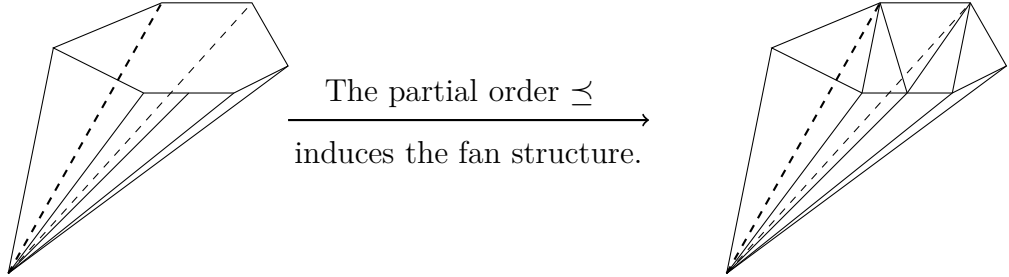
\begin{figure}
\begin{tikzpicture}[scale=1.2]
\draw[-](.5,.5)--(1,3);
\draw[-](.5,.5)--(2,2.5);
\draw[-](.5,.5)--(3,2.5);
\draw[-](.5,.5)--(3.6,2.8);
\draw[-](.5,.5)--(2.5,2.5);
\draw[dashed,-](.5,.5)--(3.2,3.5);
\draw[dashed,-,thick](.5,.5)--(2.2,3.5);
\draw[-](1,3)--(2,2.5)--(3,2.5)--(3.6,2.8)--(3.2,3.5)--(2.2,3.5)--cycle;
\draw[->,thick](3.6,2.2)--(7.6,2.2);
\draw (5.5,2.5) node { The partial order $\preceq$};
\draw (5.6,1.85) node { induces the fan structure.};
\end{tikzpicture}
\qquad
\begin{tikzpicture}[scale=1.2]
\draw[-](.5,.5)--(1,3);
\draw[-](.5,.5)--(2,2.5);
\draw[-](.5,.5)--(3,2.5);
\draw[-](.5,.5)--(3.6,2.8);
\draw[-](2,2.5)--(2.2,3.5);
\draw[-](3,2.5)--(3.2,3.5);
\draw[-](3.2,3.5)--(2.5,2.5);
\draw[-](2.2,3.5)--(2.5,2.5);
\draw[-](.5,.5)--(2.5,2.5);
\draw[dashed,-](.5,.5)--(3.2,3.5);
\draw[dashed,-,thick](.5,.5)--(2.2,3.5);
\draw[-](1,3)--(2,2.5)--(3,2.5)--(3.6,2.8)--(3.2,3.5)--(2.2,3.5)--cycle;
\end{tikzpicture}
\caption{
The partial order $\preceq$ on the extremal rays induces a simplicial
decomposition of the cone of Betti diagrams, where the 
simplices correspond to chains of extremal rays with respect to the partial order.
This simplicial decomposition is essential to many applications of
Boij--S\"oderberg theory.}
\end{figure}

Besides providing greater insight into the structure of modules with pure
resolutions and supernatural sheaves, our results have two further
implications.  First, the partial orders $\preceq$ are defined in
terms of the combinatorial data of degree sequences and root sequences 
(see Sections~\ref{sec:prelim deg} and~\ref{sec:prelim root}), and
depend on the total order of $\ZZ$; thus, they are only formally
related to $S$ and $\PP^{n-1}$.  However, our reinterpretations of 
$\preceq$ in terms of module- and sheaf-theoretic properties 
suggest the naturality not only of $\preceq$, but also of the
induced simplicial decompositions of both cones. In other words,
while there exist graded modules whose Betti diagrams can be written
as a positive sum of pure tables in several ways, Theorem~\ref{thm:poset:deg:main} 
suggests that the most natural of these decompositions is the
Boij--S\"oderberg decomposition produced
by~\cite{EiScConjOfBS07}*{Decomposition Algorithm}, and similarly for
Theorem~\ref{thm:poset:root:main} and cohomology tables.

A second implication involves the extension of 
Boij--S\"oderberg theory to more complicated projective
varieties or graded rings.  
For instance, the cone of free resolutions over a quadric hypersurface ring of 
$\Bbbk[x,y]$ is described in~\cite{bbeg}.  The extremal rays in this case correspond
to pure resolutions of finite or infinite length.  We could thus consider
a partial order defined in parallel to Boij--S\"oderberg's original definition (based
on the combinatorial data of a degree sequence), or, following our result, we could consider
a partial order defined in terms of nonzero homomorphisms.
These partial orders are different in this hypersurface case; only the second definition leads to
a decomposition algorithm for Betti diagrams.  See Example~\ref{ex:hypersurface} below for details.

For more general graded rings there even exist extremal rays that do
not correspond to pure resolutions.  (Similar statements hold for more
general projective varieties.)  There is thus no obvious extension of
Boij--S\"oderberg's original partial order to these cases.  By
contrast, the reinterpretations of $\preceq$ provided by
Theorems~\ref{thm:poset:deg:main} and ~\ref{thm:poset:root:main} are
readily applicable to arbitrary projective varieties and graded rings.
We discuss one such case in Example~\ref{ex:bigraded}.

Theorems~\ref{thm:poset:deg:main} and~\ref{thm:poset:root:main} hold
over an arbitrary field $\Bbbk$, and their proofs involve variants of
the constructions in~\cite{EiScConjOfBS07} for supernatural sheaves
and modules with pure resolutions.  When $\ch(\Bbbk)=0$, there also
exist equivariant constructions of supernatural vector
bundles~\cite{EiScConjOfBS07}*{Thm.~6.2} and of finite length modules
with pure resolutions~\cite{efw}*{Thm.~0.1}.  For these we prove the most
natural equivariant analogues of 
%Theorems~\ref{thm:poset:deg:main} and~\ref{thm:poset:root:main}.
our main results. 

\begin{thm}\label{thm:equivariant:deg}
Let $V$ be an $n$-dimensional $\Bbbk$-vector space with 
$\ch(\Bbbk)=0$, and let $\rho_d$ and $\rho_{d'}$ be
the extremal rays of the cone of Betti diagrams for $S=\Sym(V)$
corresponding to finite length modules 
with pure resolutions of types $d$ and $d'$.  
Then $\rho_d\preceq \rho_{d'}$ if and only if there
exist finite length $\GL(V)$-equivariant 
%Cohen--Macaulay 
modules $M$ and $M'$ 
with pure resolutions of types $d$ and $d'$, respectively, 
with $\Hom_{\GL(V)}(M',M)_{\le 0}\ne 0$.
\end{thm}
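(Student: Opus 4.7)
The plan is to handle the two directions of Theorem~\ref{thm:equivariant:deg} separately.

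The \emph{if} direction follows immediately from Theorem~\ref{thm:poset:deg:main}: a finite length module over $S$ is Cohen--Macaulay of codimension $n$, and a nonzero $\GL(V)$-equivariant homomorphism of non-positive degree is in particular a nonzero $S$-module homomorphism of non-positive degree, so Theorem~\ref{thm:poset:deg:main} applies and yields $\rho_d \preceq \rho_{d'}$.

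For the \emph{only if} direction, we begin from the observation that, since $\ch(\Bbbk) = 0$, the Eisenbud--Fl\o ystad--Weyman construction \cite{efw}*{Thm.~0.1} produces finite length $\GL(V)$-equivariant modules $M_d$ and $M_{d'}$ with pure resolutions of types $d$ and $d'$. Up to $\GL(V)$-equivariant isomorphism and twist by a character these modules are essentially unique, since the equivariant minimal free resolution is forced by the Schur functor decomposition of the syzygy modules. It therefore suffices to exhibit a nonzero element of $\Hom_{\GL(V)}(M_{d'}, M_d)_{\le 0}$, which by the reductivity of $\GL(V)$ coincides with the $\GL(V)$-invariant part of $\Hom_S(M_{d'}, M_d)_{\le 0}$.

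The strategy is to reduce to the case where $d$ and $d'$ are related by an elementary cover in $\preceq$. The partial order on degree sequences is generated by moves that increment a single entry by one (subject to the strictly increasing condition being preserved), and each such move corresponds to an explicit combinatorial operation on the associated sequence of partitions for the EFW resolution. For an elementary cover we construct the required equivariant homomorphism directly from the Schur functor structure: Pieri's rule locates a copy of the generating Schur functor of $M_d$ inside the appropriate graded piece of $M_{d'}$, and one verifies that the resulting map annihilates the relations. Composing along a chain from $d$ to $d'$ then produces the desired equivariant homomorphism, provided the composition is nonzero.

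The main obstacle is precisely this non-vanishing. The key difference between this result and Theorem~\ref{thm:poset:deg:main} is that the $\GL(V)$-invariant part of a nonzero $\GL(V)$-representation can vanish, so Theorem~\ref{thm:poset:deg:main} cannot be invoked as a black box; the equivariant map must be produced explicitly. Non-vanishing along a chain of elementary covers can be ensured by choosing each elementary-cover homomorphism to generate the (expected to be one-dimensional) trivial $\GL(V)$-summand in the relevant Hom space, and by tracking the image of the generator of $M_{d'}$ through the composition via Littlewood--Richardson or character calculations.
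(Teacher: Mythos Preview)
Your reverse implication is fine; it is exactly the content of Proposition~\ref{prop:half:poset:deg}, since a nonzero equivariant map is in particular a nonzero $S$-linear map.

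The forward implication, however, has a genuine gap. Your uniqueness claim for the equivariant module with a pure resolution of type $d$ is false, and more importantly, the EFW modules $M(d)$ and $M(d')$ themselves do \emph{not} in general admit a nonzero equivariant homomorphism in nonpositive degree. Take an elementary cover with $d'_i=d_i+1$ for some $i\ge 2$ and $d'_j=d_j$ otherwise. Then $d'_0=d_0$, and the generators of $M(d)$ and $M(d')$ sit in the same degree $d_0$ as the irreducible representations $\Sc_{\lambda}V$ and $\Sc_{\lambda'}V$ with $\lambda\ne\lambda'$ (see Remark~\ref{remark:eqvtefw}). Any $\GL(V)$-equivariant degree-$0$ map must restrict to a $\GL(V)$-map $\Sc_{\lambda'}V\to\Sc_{\lambda}V$, which is zero; and negative-degree maps vanish because $M(d)$ has nothing below degree $d_0$. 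So $\Hom_{\GL(V)}(M(d'),M(d))_{\le 0}=0$ already at the level of a single elementary cover, and your Pieri argument cannot get started.

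The paper's remedy is precisely to abandon the bare EFW modules on the source side: one replaces $M(d')$ by $M(d')\otimes V$ (and, after iterating, by $M(d')\otimes V^{\otimes r}$), which is still a finite length equivariant module with a pure resolution of type $d'$. Now Pieri's rule does produce a surjection $\Sc_{\lambda'}V\otimes V\twoheadrightarrow\Sc_{\lambda}V$, and one checks via~\cite{sam}*{Lemma~1.6} that it extends to a map of modules. For non-vanishing of the composition along a chain, the paper does not rely on one-dimensionality of invariant Hom spaces (which is not established); instead, after the standard shift one has $d_k=d'_k$ for some $k$, and each elementary-cover map induces a \emph{surjection} $F'_k\to F_k$ on the $k$th terms of the minimal free resolutions. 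Surjections compose to surjections, so the full composite is not null-homotopic.
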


\begin{thm}\label{thm:equivariant:root}
Let $V$ be an $n$-dimensional $\Bbbk$-vector space with
$\ch(\Bbbk)=0$, and let $\rho_f$ and $\rho_{f'}$ be the extremal rays
of the cone of cohomology tables for $\PP^{n-1} = \PP(V)$
corresponding to supernatural vector bundles of types $f$ and $f'$.
Then $\rho_f\preceq \rho_{f'}$ if and only if there exist
$\GL(V)$-equivariant supernatural vector bundles $\EE$ and $\EE'$ of
types $f$ and $f'$, respectively, with $\Hom_{\GL(V)}(\EE',\EE)\ne 0$.
\end{thm}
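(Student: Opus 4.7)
My plan is to prove the two directions of the biconditional separately. The backward direction follows essentially for free from Theorem~\ref{thm:poset:root:main}: any nonzero element of $\Hom_{\GL(V)}(\EE',\EE)$ is in particular a nonzero element of $\Hom_{\PP^{n-1}}(\EE',\EE)$, and since $\EE$ and $\EE'$ are supernatural vector bundles of types $f$ and $f'$, the non-equivariant theorem immediately yields $\rho_f \preceq \rho_{f'}$.

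For the forward direction, I would invoke the Eisenbud--Schreyer equivariant construction \cite[Thm.~6.2]{EiScConjOfBS07}, which produces for every root sequence $g$ a canonical $\GL(V)$-equivariant supernatural bundle of the form $\EE_g = \Sigma^{\lambda(g)} Q \otimes \cO(a(g))$, where $Q$ is the tautological rank $n-1$ quotient bundle on $\PP(V)$ and the partition $\lambda(g)$ and integer $a(g)$ are read off from $g$ via Bott's theorem. Setting $\EE = \EE_f$ and $\EE' = \EE_{f'}$, the task reduces to verifying
\begin{equation*}
\Hom_{\GL(V)}(\EE_{f'}, \EE_f) \;=\; \bigl[H^0\bigl(\PP(V),\, \EE_f \otimes \EE_{f'}^\vee\bigr)\bigr]^{\GL(V)} \;\neq\; 0.
\end{equation*}
I would compute this directly: decompose $\EE_f \otimes \EE_{f'}^\vee$ into irreducible equivariant summands via the Littlewood--Richardson rule, apply Bott's theorem to each summand to obtain its $H^0$ as a Schur module of $V$, and check that the trivial representation occurs. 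To keep this manageable, I would reduce to covering relations in $\preceq$, where $f$ and $f'$ differ in a single entry by $1$; for each cover one should exhibit an elementary $\GL(V)$-equivariant map---built from the tautological surjection $V \otimes \cO_{\PP(V)} \twoheadrightarrow Q$, a piece of the Euler sequence, or multiplication by a section of $\cO(1)$---and then compose such maps along a chain from $f'$ to $f$.

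\textbf{Main obstacle.} The assignment $g \mapsto (\lambda(g), a(g))$ is not transparently order-preserving: a single cover in $\preceq$ may simultaneously alter both $\lambda$ and $a$ nontrivially. The technical heart of the proof is thus the Bott calculation showing that the Schur summands of $H^0(\PP(V), \EE_f \otimes \EE_{f'}^\vee)$ include the trivial $\GL(V)$-representation precisely when the inequalities $f_i \geq f'_i$ all hold (equivalently, that at least one elementary equivariant map survives nonzero along the chain).
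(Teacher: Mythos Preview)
Your backward direction is correct and matches the paper's (which invokes Proposition~\ref{prop:half:poset:root}).

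The forward direction, however, has a genuine gap. Your plan is to take the canonical Eisenbud--Schreyer bundles $\EE = \EE_f$ and $\EE' = \EE_{f'}$ and show that $\Hom_{\GL(V)}(\EE', \EE) \ne 0$, i.e., that $\Hs^0(\PP(V),\sheafHom(\EE',\EE))$ contains the trivial representation. But this is false in general. The paper computes an explicit instance (the example in Section~\ref{sec:equiv:root}): for $f = (-2,-3,-4,-5) \preceq f' = (-1,-2,-3,-4)$ the canonical bundles satisfy $\Hom(\EE', \EE) \cong V \otimes \bigwedge^n V$, which contains no $\GL(V)$-invariant, and the paper explicitly notes that there is therefore no nonzero equivariant morphism $\EE'\to\EE$. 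The same obstruction already appears for a single cover on $\PP^1$: with $f=(a)$ and $f'=(a+1)$ one finds that $\Hom(\EE',\EE)$ is a nontrivial irreducible $\GL(V)$-representation. So neither your direct Bott computation nor your cover-by-cover composition can succeed with the fixed bundles $\EE_f$, $\EE_{f'}$; the ``elementary equivariant maps'' you propose (e.g., multiplication by a section of $\cO(1)$) do not exist, since $\Hs^0(\cO(1))$ is itself a nontrivial irreducible.

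The paper sidesteps this by relaxing which bundle plays the role of $\EE'$. It first verifies only the \emph{non-equivariant} statement $\Hom(\EE', \EE) \ne 0$ for the canonical bundles---a much easier Borel--Weil--Bott and Littlewood--Richardson calculation, since one needs only \emph{some} irreducible summand of $\sheafHom(\EE', \EE)$ to have global sections, not the trivial one. It then replaces $\EE'$ by $\EE'' := \EE' \otimes \Hom(\EE', \EE)$, tensoring by the $\GL(V)$-representation $\Hom(\EE', \EE)$ viewed as a trivial bundle. This $\EE''$ is again supernatural of type $f'$, and now $\Hom(\EE'', \EE) \cong \Hom(\EE', \EE)^* \otimes \Hom(\EE', \EE)$ contains the $\GL(V)$-invariant trace element, furnishing the desired nonzero equivariant morphism.
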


The action of $\GL(V)$ has two orbits on the maximal ideals of $S$:
one consisting of the maximal ideal $(x_1, \dots, x_n)$ and the other
consisting of its complement. An equivariant Cohen--Macaulay module
therefore has only two options for its support, and hence either has finite
length or must be a free module. Thus the finite length hypothesis in
Theorem~\ref{thm:equivariant:deg} is the natural equivariant analogue
of the Cohen--Macaulay hypothesis in Theorem~\ref{thm:poset:deg:main}.

As above, the statement for pure resolutions is more subtle than the
corresponding statement for supernatural vector bundles. The modules
constructed in \cite[\S 3]{efw} do not have nonzero equivariant
homomorphisms between them, but the explicit combinatorics of the
representation theory involved suggests a minor modification which
does work. This also suggests how the maps should be defined in terms of
the explicit presentation of the modules; the remaining nontrivial step is to
show that these maps are in fact well-defined. The main obstacle is 
that such maps must be compatible with the actions of both the
general linear group and the symmetric algebra, and the interplay
between the two is delicate. This key issue in the proof of
Theorem~\ref{thm:equivariant:deg} is accomplished through a careful 
computation involving Pieri maps (combined with results from~\cite{sam}).

%%%%%%%%%%%%%%%%%%%%%%%%%%%%%%%%%%%%%%%
\subsection*{Outline}%%%%%%%%%%%%%%%%%%%%%%
%%%%%%%%%%%%%%%%%%%%%%%%%%%%%%%%%%%%%%%
In this paper, we first focus on the cone of Betti diagrams for $S$. 
In Section~\ref{sec:prelim deg}, we prove the reverse implications of 
Theorems~\ref{thm:poset:deg:main} and~\ref{thm:equivariant:deg}. 
We then construct nonzero morphisms 
between modules with pure resolutions. 
Sections~\ref{sec:construct deg} and~\ref{sec:equiv:deg}, 
respectively, address the forward directions of 
Theorems~\ref{thm:poset:deg:main} and~\ref{thm:equivariant:deg}.
We next address the cone of cohomology tables for $\PP^{n-1}$. 
In Section~\ref{sec:prelim root}, we prove the reverse
implications of Theorems~\ref{thm:poset:root:main}
and~\ref{thm:equivariant:root}.  We then turn to the construction of
nonzero morphisms between supernatural sheaves:
Sections~\ref{sec:construct root} and~\ref{sec:equiv:root},
respectively, address the forward directions of
Theorems~\ref{thm:poset:root:main} and~\ref{thm:equivariant:root}.
Finally, we provide in Section~\ref{sec:extensions} 
a brief discussion of how Theorem~\ref{thm:poset:deg:main} has been 
applied in the study of Boij--S\"oderberg theory over other graded rings.
We suggest the survey~\cite{ES:ICMsurvey}  
to the reader seeking additional background on Boij--S\"oderberg theory. 

%%%%%%%%%%%%%%%%%%%%%%%%%%%%%%%%%%%%%%%
\subsection*{Acknowledgements}%%%%%%%%%%%%%%%%%%%%%%
%%%%%%%%%%%%%%%%%%%%%%%%%%%%%%%%%%%%%%%
We would like to thank J.~Burke, D.~Eisenbud, C.~Gibbons, W. F.~Moore, F.-O.~Schreyer, B.~Ulrich, and J.~Weyman
for helpful discussions. Significant parts of this work were completed
at the Pan American Scientific Institute Summer School on
``Commutative Algebra and its Connections to Geometry'' in Olinda,
Brazil, and when the second author visited Purdue University; we thank
both of these institutions for their hospitality.  The computer
algebra system \texttt{Macaulay2} \cite{M2} provided valuable
assistance in studying examples. 

%%%%%%%%%%%%%%%%%%%%%%%%%%%%%%%%%%%%%%%
\section{The poset of degree sequences}%%%%%%%%%%%%%%%%%%%
%%%%%%%%%%%%%%%%%%%%%%%%%%%%%%%%%%%%%%%
\label{sec:prelim deg}

Let $M$ be a finitely generated graded $S$-module. 
The \defi{$(i,j)$th graded Betti number} of $M$, 
denoted $\beta_{i,j}(M)$, is $\dim_\Bbbk \mathrm{Tor}_i^S(\Bbbk, M)_j$. 
The \defi{Betti diagram} of $M$ is a table, with rows indexed by $\ZZ$ 
and columns by $0, \dots, n$, 
such that the entry in column $i$ and row $j$ is $\beta_{i,i+j}(M)$.
A sequence $d=(d_0, \dots, d_n)\in \left( \ZZ\cup \{\infty\} \right)^{n+1}$
is called a \defi{degree sequence} for $S$ if $d_i>d_{i-1}$ for all $i$ 
(with the convention that $\infty>\infty$). 
The \defi{length} of $d$, denoted $\ell(d)$, 
is the largest integer $t$ such that $d_t$ is finite.

\begin{definition}\label{def:pure:res}
A graded $S$-module $M$ is said to have a \defi{pure resolution of type
$d$} if a
minimal free resolution of $M$ has the form
\[
0\leftarrow M \leftarrow S(-d_0)^{\beta_{0,d_0}} \leftarrow S(-d_1)^{\beta_{1,d_1}} \leftarrow
\dots
\leftarrow 
S(-d_{\ell(d)})^{\beta_{\ell(d),d_{\ell(d)}}} \leftarrow 0.
\qedhere
\]
\end{definition}

For every degree sequence $d$, there exists a 
Cohen--Macaulay module with a
pure resolution of type $d$~\cite{EiScConjOfBS07}*{Theorem~0.1} (see also~\cite{boij-sod1}*{Conjecture~2.4}, \cite{efw}*{Theorem~0.1}). 
The Betti diagram of any finitely generated $S$-module can be written as a positive
rational combination of the Betti diagrams of Cohen--Macaulay modules with
pure resolutions (see ~\cite {EiScConjOfBS07}*{Theorem~0.2} and~\cite{BoijSoderbergNonCM08}*{Theorem~2}). 
The \defi{cone of Betti diagrams} for $S$ 
is the convex cone inside $\bigoplus_{j \in \ZZ}\QQ^{n+1}$ 
generated by the Betti diagrams of all finitely generated $S$-modules.
Each degree sequence $d$ corresponds to a unique extremal ray of this cone, 
which we denote by $\rho_d$, and  
every extremal ray is of the form $\rho_d$ for some degree sequence $d$.

\begin{definition}\label{defn:partial:deg}
For two degree sequences $d$ and $d'$, we say that $d\preceq d'$  and that
$\rho_d\preceq \rho_{d'}$ if $d_i\leq d_i'$ for all $i$. 
\end{definition}
This partial order induces a simplicial fan structure on the cone of Betti diagrams, 
where simplices correspond to chains of degree sequences under the partial
order $\preceq$.
We now show that the existence of a nonzero homomorphism between 
two modules with pure resolutions implies the comparability of their 
corresponding degree sequences. 
This result provides the reverse implications for 
Theorems~\ref{thm:poset:deg:main} and~\ref{thm:equivariant:deg}. 

\begin{prop}\label{prop:half:poset:deg}
Let $M$ and $M'$ be graded Cohen--Macaulay $S$-modules 
with pure resolutions of types $d$ and $d'$, respectively. 
If $\Hom(M',M)_{\leq 0}\ne 0$, then $d\preceq d'$.
\end{prop}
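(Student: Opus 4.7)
The plan is to realize $\Hom_S(M',M)$ as a subquotient of $\Ext^k_S(M', \Omega^k M)$ for each $k$ in a suitable range, where $\Omega^k M$ is the $k$-th syzygy of $M$ in its minimal free resolution. Since purity guarantees that $\Omega^k M$ is generated in degree $d_k$ (it is the image of $F_k = S(-d_k)^{\beta_k}$ inside $F_{k-1}$), the hypothesis $\Hom_S(M',M)_{\leq 0} \neq 0$ will then directly force $d_k \leq d_k'$ for each such $k$.

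First I would handle the indices $i > \ell(d')$ by establishing $\ell(d) \geq \ell(d')$: the image of $\phi$ is a submodule of $M$, hence has all associated primes of codimension $\ell(d)$ by the Cohen--Macaulay hypothesis; it is also a quotient of $M'$, hence of codimension at least $\ell(d')$. Since $d_i' = \infty$ for $i > \ell(d')$, the inequality $d_i \leq d_i'$ is automatic in that range.

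The central step is to apply $\Hom_S(M', -)$ to the short exact sequences $0 \to \Omega^{k+1} M \to F_k \to \Omega^k M \to 0$ and to exploit that $\Ext^j_S(M', S) = 0$ for $j \neq c' := \ell(d')$ --- a standard consequence of the Cohen--Macaulay hypothesis on $M'$ via local duality --- so that $\Ext^j_S(M', F_k) = 0$ for every $k$ whenever $j \neq c'$. The long exact sequence then produces connecting maps
\[
\Ext^k_S(M', \Omega^k M) \longrightarrow \Ext^{k+1}_S(M', \Omega^{k+1} M)
\]
that are isomorphisms whenever $\{k, k+1\} \cap \{c'\} = \emptyset$ and at least injective when $k \neq c'$. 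Iterating yields isomorphisms $\Hom_S(M', M) \cong \Ext^k_S(M', \Omega^k M)$ for $0 \leq k \leq c' - 1$, together with an injection $\Hom_S(M', M) \hookrightarrow \Ext^{c'}_S(M', \Omega^{c'} M)$. Then, for each $0 \leq k \leq c'$, the Ext group $\Ext^k_S(M', \Omega^k M)_e$ is a subquotient of $\Hom_S(F'_k, \Omega^k M)_e = (\Omega^k M)^{\beta_k'}_{d_k' + e}$; the hypothesis $\Hom_S(M', M)_e \neq 0$ therefore forces $(\Omega^k M)_{d_k' + e} \neq 0$, and since $\Omega^k M$ is generated in degree $d_k$, this gives $d_k' + e \geq d_k$, hence $d_k \leq d_k' + e \leq d_k'$.

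The main point requiring care is the injectivity at $k = c' - 1$, where the long exact sequence yields only an injection rather than an isomorphism --- because $\Ext^{c'}_S(M', F_{c'-1}) \neq 0$. One must track this step carefully through the composition of connecting maps to ensure that the map all the way into $\Ext^{c'}_S(M', \Omega^{c'} M)$ is still injective; the remainder of the chase is formal.
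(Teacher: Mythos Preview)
Your proof is correct and takes a genuinely different route from the paper's.

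The paper lifts a nonzero $\phi\in\Hom(M',M)_{\le 0}$ to a map of minimal free resolutions $\{\phi_i\colon F'_i\to F_i\}$, argues by contradiction that if $d'_j<d_j$ then $\phi_j=0$ for degree reasons, and then propagates this vanishing in both directions: forward (toward higher homological degree) by constructing homotopies, and backward by applying $\Hom_S(-,S(-n))$ and using that the dualized complexes are again minimal resolutions (this is where the Cohen--Macaulay hypothesis enters). The upshot is that $\phi_\bullet$ is null-homotopic, contradicting $\phi\ne 0$.

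Your argument instead uses dimension-shifting: from $\Ext^j_S(M',S)=0$ for $j\ne c'=\ell(d')$ you extract, for each $0\le k\le c'$, an injection $\Hom_S(M',M)\hookrightarrow\Ext^k_S(M',\Omega^kM)$, and then read off the degree bound $d_k\le d'_k+e$ from the fact that this Ext group is a subquotient of $\Hom_S(F'_k,\Omega^kM)$ and that $\Omega^kM$ lives in degrees $\ge d_k$. This is cleaner in that it avoids the contradiction, the explicit homotopy construction, and the dualization step; it also isolates exactly what the Cohen--Macaulay hypothesis on $M'$ buys (the Ext-vanishing that makes the connecting maps injective). The paper's approach, on the other hand, is more explicit about the comparison maps $\phi_i$ themselves, which is in keeping with the rest of the paper where such lifts are constructed and analyzed directly. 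Your closing remark about ``care at $k=c'-1$'' is slightly overstated: since $\Ext^{c'-1}_S(M',F_{c'-1})=0$, the injectivity there is immediate from the long exact sequence, and no further chase is needed.
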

\begin{proof}
Write $\ell' = \ell(d')$ and $\ell = \ell(d)$.  If $\ell' > \ell$, then
$\codim M' > \codim M$, and, by~\cite{BrHe:CM}*{Propositions~1.2.3
and~1.2.1}, $\Hom(M', M) = 0$.

Therefore we may assume that $\ell' \leq \ell$. 
By hypothesis, we may fix a nonzero homomorphism 
$\phi \in \Hom(M',M)_t$ for some $t\leq 0$. 
Let $F_\bullet$ and $F'_\bullet$ be minimal graded free resolutions of $M$
and $M'$, respectively, and let $\left\{ \phi_i\colon  F'_i \to F_i
\right\}_{i \geq 0}$ be the comparison maps in a lifting of $\phi$. 
Suppose by way of contradiction  that %$d \not \preceq d'$. Let $j$ be 
there is a $j$ such that $d'_j < d_j$. 
Since $d'_j < d_j$, we see that $\phi_j = 0$.  Hence,
each $\phi_i$ such that $j \leq i \leq \ell'$ can be made
zero by some homotopy equivalence.
Write $(-)^\vee = \Hom_S(-, S(-n))$.
Since $M$ and $M'$ are Cohen--Macaulay, 
we note that $(F_\bullet)^\vee$ and $(F'_\bullet)^\vee$ 
are minimal graded free resolutions of 
$\Ext^\ell_S(M,S(-n))$ and $\Ext^{\ell'}_S(M', S(-n))$. 
Further, the maps $\{\phi_i^\vee\}_{i \geq 0}$ define an element of 
$\Ext^{\ell-\ell'}_S\left(\Ext_S^\ell(M, S(-n)), \Ext_S^{\ell'}(M', S(-n))\right)$. 
In fact, if we write 
$N = \coker \left((F_{\ell'-1})^\vee \longrightarrow (F_{\ell'})^\vee\right)$, 
then $(\phi_{\ell'})^\vee \colon N \longrightarrow \Ext_S^{\ell'}(M', S(-n)))$ 
is the zero homomorphism.
Hence $\phi_i^\vee = 0$ for all $0 \leq i \leq \ell'$, and therefore $\phi = 0$.
\end{proof}

Proposition~\ref{prop:half:poset:deg} is untrue if we do not 
assume that $M'$ is Cohen--Macaulay. 
For example, consider $S = \Bbbk[x,y]$, $M = S/\langle x^2\rangle $, and
$M' = S \oplus \Bbbk$. 
We used the hypothesis that $M'$ is Cohen--Macaulay 
to have that $\codim M' = \ell(d')$ and that
$\Hom_S(F'_\bullet, S(-n))$ is a resolution. 
%However, the hypothesis on $M$ in this proposition could be relaxed to 
%having a pure resolution and  $\codim M \geq \ell(d')$.

%%%%%%%%%%%%%%%%%%%%%%%%%%%%%%%%%%%%%%%
\section{Construction of morphisms between modules with pure resolutions}
%%%%%%%%%%%%%%%%%%%%%%%%%%%%%%%%%%%%%%%
\label{sec:construct deg}

In Theorem~\ref{thm:poset:deg:main} we must, necessarily, consider more
than $\Hom(M',M)_0$. 
For instance, if $n=2, d=(0,1,2)$, and $d'=(1,2,3)$, then any $M$ and
$M'$ with pure resolutions of types $d$ and $d'$ will be isomorphic to
$\Bbbk^m$ and $\Bbbk(-1)^{m'}$, respectively, for some integers $m,m'$.  
In this case, $\Hom(M',M)_0=0$, whereas $\Hom(M',M)_{-1}\ne 0$.

However, it is possible to reduce to the consideration of $\Hom(M',M)_0$.
To do this, let $t := \min \{d'_i - d_i \mid d'_i\neq \infty \}$. 
By replacing $d'$ by $d' - (t,\dots,t)$, 
the forward direction of Theorem~\ref{thm:poset:deg:main} 
is an immediate corollary of the following result. 

\begin{thm}
\label{thm:deg:after:reduction}
Let $d \preceq d'$ be degree sequences for $S$ with 
$d_j = d'_j$ for some $0 \leq j \leq \ell(d')$. 
Then there exist finitely generated graded Cohen--Macaulay
modules $M$ and $M'$ with pure resolutions of types $d$ and $d'$, respectively,
with $\Hom(M',M)_0\ne 0$.
\end{thm}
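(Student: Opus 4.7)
The plan is to employ a flexible version of the Eisenbud--Schreyer construction (the Construction~\ref{modif:es} mentioned in the introduction) that realizes both $M$ and $M'$ as pushforwards from a single common source variety, and then exhibit the nonzero morphism by working on that source. Concretely, I would fix a product $X = \PP^{n-1} \times Y$, with $Y$ a suitably large product of projective spaces such as $(\PP^1)^N$, and construct two Koszul-type complexes $\KK_\bullet$ and $\KK'_\bullet$ on $X$ of the same length whose derived pushforwards along $\pi \colon X \to \PP^{n-1}$ yield Cohen--Macaulay modules $M$ and $M'$ with pure resolutions of types $d$ and $d'$, respectively. The freedom to subdivide the gaps $d_i - d_{i-1}$ and $d'_i - d'_{i-1}$ into many small contributions across the $\PP^1$ factors is what makes this common-source setup possible. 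The first step of the proof is to verify that the standard cohomological inputs used in \cite[\S5]{EiScConjOfBS07} (Bott vanishing, the projection formula, the codimension computation, and purity of the resulting resolution) survive under this modification; these verifications are routine but need to be carried out carefully in the flexible setting.

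Next, I would construct an explicit morphism of complexes $\Phi_\bullet \colon \KK'_\bullet \to \KK_\bullet$ on $X$. The point of the hypothesis $d_j = d'_j$ is that, after arranging the common-source construction appropriately, the term of $\KK'_\bullet$ at homological position $j$ shares its $\PP^{n-1}$-twist $\cO(-d_j)$ with the corresponding term of $\KK_\bullet$. This agreement permits defining $\Phi_j$ via a natural inclusion or contraction of the line bundles pulled back from $Y$, while taking $\Phi_i = 0$ for $i \neq j$. Compatibility with the differentials reduces to a combinatorial identity built into the Koszul-type structure, and can be checked directly.

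The principal obstacle, and the technical heart of the argument, is to show that $\pi_*(\Phi_\bullet)$ descends to a \emph{nonzero} element of $\Hom_S(M', M)_0$ rather than to a lift of the zero homomorphism. This requires a careful analysis of the hypercohomology spectral sequence for $\pi_*$: one has to follow $\Phi_\bullet$ through the higher direct images on $Y$, identify the unique surviving contribution at position $j$, and confirm it maps to a nonzero scalar entry in the induced map between the pure resolutions of $M'$ and $M$. This is the step where the specific contractions chosen in constructing $\Phi_j$ must be calibrated, and I expect this pushforward computation to closely parallel, but meaningfully extend, the bookkeeping of~\cite[\S5]{EiScConjOfBS07}. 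Once the nonvanishing is established, the forward direction of Theorem~\ref{thm:poset:deg:main} follows by the reduction described just before the statement of the theorem.
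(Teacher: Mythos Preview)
Your overall architecture---realize both pure resolutions as pushforwards of twisted Koszul complexes $\KK_\bullet,\KK'_\bullet$ from a single product $\PP^{n-1}\times(\PP^1)^N$, build a morphism of complexes upstairs, and push it down---is exactly the paper's approach, and your remarks about adapting the Eisenbud--Schreyer vanishing and purity arguments to this flexible setting are on target.

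The gap is in your construction of $\Phi_\bullet$. You propose to set $\Phi_i=0$ for $i\neq j$ and take $\Phi_j$ to be some inclusion or contraction. But $\KK_\bullet$ and $\KK'_\bullet$ are acyclic complexes on the source (the $f_p$ form a regular sequence), so for $\Phi_\bullet$ to be a chain map with $\Phi_{j-1}=\Phi_{j+1}=0$, the image of $\Phi_j$ must lie in the boundaries of $\KK_\bullet$ and $\Phi_j$ must vanish on the boundaries of $\KK'_\bullet$. Such a $\Phi_\bullet$ induces the zero map on the cokernel sheaves, and hence zero on $M'\to M$ after pushforward. No amount of spectral-sequence bookkeeping will produce a nonzero element of $\Hom(M',M)_0$ from this $\Phi_\bullet$; the ``combinatorial identity'' you allude to cannot hold unless $\Phi_j$ is essentially trivial.

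What the paper does instead is define $h_\bullet\colon\KK'_\bullet\to\KK_\bullet$ as multiplication by a single form $h$ of multidegree $(0,a-a')$ on the $(\PP^1)^N$ factors (the twists satisfy $a-a'\in\NN^N$ precisely because $d\preceq d'$). This is automatically a chain map, nonzero in \emph{every} homological degree. The hypothesis $d_j=d'_j$ is not used to build the map; it is used only afterwards to certify that the induced $\nu_\bullet\colon F'_\bullet\to F_\bullet$ is not null-homotopic. Since $F_j$ and $F'_j$ are generated in the same degree, $\nu_j$ is a matrix of scalars, so it suffices to show $\nu_j\neq 0$. The paper does this by observing that at the relevant Koszul position the $\Hs^0$ and $\Hs^1$ factors occur in identical slots for both complexes, then choosing $h$ to be a specific monomial and tracking one basis element explicitly through the pushforward.
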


\begin{remark}
The homomorphism group in Theorems~\ref{thm:poset:deg:main} 
and~\ref{thm:deg:after:reduction} 
is nonzero only for specific choices of the modules $M$ and $M'$. 
For two degree sequences $d \preceq d'$, there exist many pairs of modules
$M$, $M'$ with pure resolutions of types $d$ and $d'$, respectively,  
such that $\Hom(M',M)_{\leq 0}=0$. 
For example, take $d=d'=(0,2,4)$, 
$M = S/\< x^2,y^2 \>$, and $M' = S/\< l_1^2,l_2^2 \>$ 
for general linear forms $l_1$ and $l_2$. 
As another example, consider $d = (0,3,6) \prec d'= (0,4,8)$.  When $M
= S/\< x^3,y^3 \>$ and $M' = S/\< f,g \>$ for general quartic
forms $f$ and $g$, we again have $\Hom(M',M)_{\leq 0}=0$.
\end{remark}

The proof of Theorem~\ref{thm:deg:after:reduction} is given at the end 
of this section and involves two main steps. 

\begin{enumerate}
\item\label{eq:deg:step:1}
Construct twisted Koszul complexes $\KK_\bullet$ and $\KK_{\bullet}'$ on
a product $\PP$ of projective spaces (including a copy of
$\PP^{n-1}$) and push them forward along the
projection $\pi\colon  \PP\to \PP^{n-1}$.  This yields pure 
resolutions $F_\bullet$ and $F_\bullet'$ of types $d$ and $d'$ that 
respectively resolve modules $M$ and $M'$. 
\item\label{eq:deg:step:2}
Show that there exists a morphism $h_\bullet\colon  \KK'_\bullet \to
\KK_\bullet$ such that the induced map $\nu_\bullet \colon  F_\bullet' \to
F_\bullet$ is not null-homotopic. 
This yields a nonzero element $\psi\in \Hom_S(M', M)_0$. 
\end{enumerate}

We achieve \eqref{eq:deg:step:1} by modifying the construction of pure
resolutions by Eisenbud and Schreyer \cite{EiScConjOfBS07}*{\S5}.  
We replace their use of $\prod_i \PP^{d_i - d_{i-1}}$ with a product of
copies of $\PP^1$.  This enables us to simultaneously 
construct pure resolutions of types 
$d$ and $d'$ and a nonzero map between the modules they resolve.  The
details of \eqref{eq:deg:step:1} are contained in
Construction~\ref{modif:es}.  For \eqref{eq:deg:step:2}, we 
apply Construction~\ref{modif:es} so as to produce the morphism $h_\bullet$.
Checking that the induced map $\nu_\bullet$ is not null-homotopic 
uses, in an essential way, the hypothesis that $d_j = d'_j$ for some $0
\leq j \leq \ell(d')$.  Example~\ref{ex:degseqs} demonstrates these
arguments. Write $\PP^{1 \times r}$ for the $r$-fold product of
$\PP^1$.

\begin{construction}
[Modification of the Eisenbud--Schreyer construction of pure resolutions]
\label{modif:es}
The objects involved in this construction of a pure resolution $F_\bullet$
of type $d$ will be denoted by $\Kos^d_\bullet$, $\KK_\bullet$, and $\cL$.
The corresponding objects for the pure resolution $F'_\bullet$ of type $d'$
are $\Kos^{d'}_\bullet$, $\KK'_\bullet$, and $\cL'$. Let 
\begin{equation}
\label{equation:rForKos}
r :=\max\{d_{\ell(d)}-d_0-\ell(d), d'_{\ell(d')}-d_0-\ell(d')\}
\end{equation}
and $\PP := \PP^{n-1}\times \PP^{1\times r}$. On $\PP$, fix the coordinates 
\[
\left([x_1:x_2:\dots:x_n], [y^{(1)}_0:y^{(1)}_1], \ldots,
[y^{(r)}_0:y^{(r)}_1]\right) 
\] 
and consider the multilinear forms 
\[
f_p := \sum_{i_0 + \cdots + i_r = p} 
\left(	x_{i_0}\cdot \prod_{j=1}^r y_{i_j}^{(j)} \right)
	\qquad \text{for } p = 1,2,\dots,n+r.
\]
(Note that $i_0 \in \{1, \ldots, n\}$ and $i_j \in \{0,1\}$ for all $1 \leq
j \leq r$.) We now define 
\begin{align*}
D & := \{d_0, d_0+1, \dots, d_0+\ell(d)+r\}, &
D' & := \{d_0, d_0+1, \dots, d_0+\ell(d')+r\}, \\
\delta & := (\delta_1< \dots< \delta_r) = D \minus d, &
\delta' & := (\delta'_1< \dots< \delta'_r) = D' \minus d', \\
a & := \delta - (d_0+1, \ldots, d_0+1), & 
a' & := \delta' - (d_0+1, \ldots, d_0+1),& \\
\cL & := \cO_\PP(-d_0,a), \qquad \qquad \qquad \qquad \text{and} & 
\cL' & := \cO_\PP(-d_0,a').
\end{align*}
(We view $\delta$ and $\delta'$ as ordered sequences.) 
Let $\Kos^{d}_\bullet$ be the Koszul complex on
$f_1, \dots, f_{\ell(d)+r}$, which is an acyclic complex of sheaves on
$\PP$ of length $\ell(d)+r$ (see
\cite{EiScConjOfBS07}*{Proposition~5.2}).  Let $\KK_\bullet
:= \Kos^d_\bullet \otimes \cL$.  Let $\pi\colon\PP \rightarrow
\PP^{n-1}$ denote the projection onto the first factor. By repeated
application of~\cite{EiScConjOfBS07}*{Proposition~5.3}, 
$\pi_*\KK_\bullet$ is an acyclic complex of sheaves on $\PP^{n-1}$ of
length $\ell(d)$ such that each term is a direct sum of line
bundles. Taking global sections of this complex in all twists yields
the pure resolution $F_\bullet$ of a graded $S$-module (that is
finitely generated and Cohen--Macaulay).  We
can write the free module $F_i$ explicitly as follows.  If $s=\max\{i \mid a_i-d_j+d_0 \leq -2\}$,
then we have
\[
F_j  = S(-d_j)^{\binom{\ell(d)+r}{d_j-d_0}}\otimes \left(
    \bigotimes_{i=1}^s \Hs^1(\PP^1, \cO(a_i-d_j+d_0)) \right) \otimes
  \left( \bigotimes_{i=s+1}^r \Hs^0(\PP^1, \cO(a_i-d_j+d_0)) \right) .
\]
Let $\Kos^{d'}_\bullet$ be the Koszul complex on 
$f_1, \dots, f_{\ell(d')+r}$ and $\KK_\bullet' := \Kos^{d'}_\bullet \otimes \cL'$, 
and define $F'_\bullet$ in a similar manner.
\end{construction}

The value of $r$ in \eqref{equation:rForKos} is the least integer such
that we are able to fit both the twists 
$-d_0$ and $\min\{-d_{\ell(d)}, -d'_{\ell(d')}\}$ 
in the $\PP^{n-1}$ coordinate of the bundles 
of the complexes $\KK_\bullet$ and $\KK'_\bullet$. The
choices of $a$ and $a'$, which ensure that $F_\bullet$ and
$F'_\bullet$ are pure of types $d$ and $d'$, are dictated by the
homological degrees in $\KK_\bullet$ and $\KK_\bullet'$ that need to
be eliminated in each projection away from a $\PP^1$ component of $\PP$. 
In Example~\ref{ex:degseqs}, these homological degrees are those with an
underlined $-1$ in Table~\ref{tab:propaedeuticTwists}. 
Observe that $a - a' \in \NN^r$ since $d\preceq d'$. 
Thus there is a nonzero map
$h_\bullet\colon\KK'_\bullet\to\KK_\bullet$ that is induced by a
polynomial of multidegree $(0,a-a')$.  In \eqref{eq:deg:step:2}, we
show that $\pi_*h_\bullet$ induces the desired nonzero map. 

The following extended example contains all of the main ideas
behind the proof of Theorem~\ref{thm:deg:after:reduction}.

\begin{example}
\label{ex:degseqs}
Consider $d=(0,2,4,5,6)$ and $d'=(1,2,4,7) = (1,2,4,7,\infty)$. 
Note that $d_2 = d'_2 = 4$, so that $d$ and $d'$ satisfy the hypotheses
of Theorem~\ref{thm:deg:after:reduction}. 
Here $r=4$ and $\PP = \PP^{3}\times \PP^{1\times 4}$. 
On $\PP$, we have the Koszul complexes 
$\Kos^d_\bullet = \Kos_\bullet(\cO_\PP; f_1, \dots, f_8)$ 
and $\Kos^{d'}_\bullet = \Kos_\bullet(\cO_\PP; f_1, \dots, f_7)$. 
There is a natural map $\Kos^{d'}_\bullet\to \Kos^{d}_\bullet$ 
induced by the inclusion 
$\langle f_1, \dots, f_7\rangle \subseteq \langle f_1, \dots, f_8\rangle$. 
Here we have  
\begin{align*}
\delta & =(1,3,7,8), & \delta'=&\ (0,3,5,6), \\
a&=(0,2,6,7),  & a'=&\ (-1,2,4,5), \\
\KK_\bullet&=\Kos^d_\bullet\otimes \cO_\PP(0,a), &\text{and}\quad 
\KK'_\bullet=&\ \Kos^{d'}_\bullet\otimes \cO_\PP(0,a').
\end{align*}
Table~\ref{tab:propaedeuticTwists} shows the twists 
in each homological degree of these complexes. 

\begin{table}[h]
\parbox{5cm}{%
\begin{tabular}{|c|c|}%
\multicolumn{2}{c}{$d = (0, 2, 4, 5, 6)$}\\%
\hline
$i$ & Twist in $\KK_i$\\\hline
0  &$(0,0,  2,  6,  7)$\\%
$-1$ &$(-1,\uuline{-1},  1,  5,  6)$ \\%
$-2$ &$(-2,-2,  0,  4,  5)$ \\%
$-3$ &$(-3,-3,  \uuline{-1},  3,  4)$ \\%
$-4$ &$(-4,-4,  -2,  2,  3)$ \\%
$-5$ &$(-5,-5,  -3,  1,  2)$ \\%
$-6$ &$(-6,-6,  -4,  0,  1)$ \\%
$-7$ &$(-7,-7,  -5,  \uuline{-1},  0)$\\%
$-8$ &$(-8,-8,  -6,  -2,  \uuline{-1})$ \\%
\hline%
\end{tabular}}
\parbox{5cm}{%
\begin{tabular}{|c|c|}%
\multicolumn{2}{c}{$d' = (1, 2, 4, 7)$} \\
\hline
$i$& Twist in $\KK'_i$\\\hline
$ 0$ & $(0,\uuline{-1}, 2, 4, 5)$\\
$-1$ & $(-1,-2, 1, 3, 4)$\\
$-2$ & $(-2,-3, 0, 2, 3)$\\
$-3$ & $(-3,-4, \uuline{-1}, 1, 2)$\\
$-4$ & $(-4,-5, -2, 0, 1)$\\
$-5$ & $(-5,-6, -3, \uuline{-1}, 0)$\\
$-6$ & $(-6,-7, -4, -2, \uuline{-1})$\\
$-7$ & $(-7,-8, -5, -3, -2)$\\
\hline
\end{tabular}
\phantom{ALIGNMENT line}
\vspace*{1.5mm}
}
\caption{Twists appearing in $\KK_\bullet$ and $\KK_\bullet'$ in Example~\ref{ex:degseqs}.}
\label{tab:propaedeuticTwists}
\end{table}

Let $h$ be a nonzero homogeneous polynomial on
$\PP$ of multidegree $(0,a-a')= (0,1,0,2,2)$. Then multiplication by
$h$ induces a nonzero map $h \colon \KK_0' \to \KK_0$. To write $h$, 
we use matrix multi-index notation for the monomials in
$\Bbbk[y_0^{(1)}, y_1^{(1)}, \dots, y_0^{(4)}, y_1^{(4)}]$, where the
$i$th column represents the multi-index of the
$y^{(i)}$-coordinates. With this convention, fix 
\[
h = 
\mathbf{y}^{\left(\begin{smallmatrix}1&0&2&2\\0&0&0&0
\end{smallmatrix}\right)} 
:= y^{(1)}_0\cdot \left( y^{(3)}_0\right)^2\cdot
\left(y^{(4)}_0\right)^{2}.
\]
Denote the induced map of complexes $\KK_\bullet'\to \KK_\bullet$ by
$h_\bullet$.
Taking the direct image of $h_\bullet$ along the natural projection
$\pi\colon \PP\to\PP^3$ and its global sections in all twists induces 
a map $\nu_\bullet\colon  F_\bullet'\to F_\bullet$.

We claim that $\nu_\bullet$ is not null-homotopic. This need not hold
for an arbitrary pair $d\preceq d'$, however it does hold for a pair
of degree sequences which satisfy the hypotheses of
Theorem~\ref{thm:deg:after:reduction}.  We use the fact that
$d_2=d_2'=4$, as this implies that $\nu_2\colon F_2' \to F_2$ is a
matrix of scalars.  Since $F_\bullet'$ and $F_\bullet$ are both
minimal free resolutions, it then follows that the map $\nu_2$ factors
through a null-homotopy only if $\nu_2$ is itself the zero map.  Thus
it is enough to show that $\nu_2\ne 0$.  For this, note that
\begin{align*}
F_2&=S(-4)^{\binom{8}{4}}\otimes \Hs^1(\PP^1, \cO(-4))\otimes \Hs^1(\PP^1, \cO(-2))\otimes \Hs^0(\PP^1, \cO(2))\otimes \Hs^0(\PP^1, \cO(3))\\
\text{and}\quad
F_2'&=S(-4)^{\binom{7}{4}}\otimes \Hs^1(\PP^1, \cO(-5))\otimes \Hs^1(\PP^1,
\cO(-2))\otimes \Hs^0(\PP^1, \cO(0))\otimes \Hs^0(\PP^1, \cO(1))
\end{align*}
and that $F_2$ and $F_2'$ have 
$\Hs^1$ terms in precisely the same positions, 
and similarly for the $\Hs^0$ terms. 
We may then use~\cite{explicit}*{Lemma 7.3} to compute the map
$\nu_2\colon F'_2\to F_2$ explicitly.  Since the matrix is too large
to be written down, we simply exhibit a basis element of $F'_2$ that
is not mapped to zero.

For $I=\{i_1<\dots <i_4\}$ a subset of either $\{1, \dots, 8\}$ or $\{1,
\dots, 7\}$, we use the notation $\epsilon_I:=\epsilon_{i_1}\wedge \dots
\wedge \epsilon_{i_4}$ to write $S$-bases for 
$S(-4)^{\binom{8}{4}}$ and for $S(-4)^{\binom{7}{4}}$.
Choose the natural monomial bases for the
cohomology groups appearing in the tensor product expressions for $F_2$ and $F_2'$, and write these monomials in multi-index notation.
Recalling the above definition of $h$, we then have that 
\[
\epsilon_{1,2,3,4}\otimes
\mathbf{y}^{\left(\begin{smallmatrix}-4&-1&0&1\\-1&-1&0&0
\end{smallmatrix}\right)}
\]
is a basis element of $F_2$.  We compute
\begin{align*}
\nu_2
\left(\epsilon_{1,2,3,4}\otimes
\mathbf{y}^{\left(\begin{smallmatrix}-4&-1&0&1\\-1&-1&0&0
\end{smallmatrix}\right)}\right)
& = \epsilon_{1,2,3,4}\otimes
\mathbf{y}^{\left(\begin{smallmatrix}-4&-1&0&1\\-1&-1&0&0
\end{smallmatrix}\right)}\cdot h\\
& = \epsilon_{1,2,3,4}\otimes
\mathbf{y}^{\left(\begin{smallmatrix}-4&-1&0&1\\-1&-1&0&0
\end{smallmatrix}\right)+\left(\begin{smallmatrix}1&0&2&2\\0&0&0&0
\end{smallmatrix}\right)}\\
& = \epsilon_{1,2,3,4}\otimes
\mathbf{y}^{\left(\begin{smallmatrix}-3&-1&2&3\\-1&-1&0&0
\end{smallmatrix}\right)}.
\end{align*}
Since this yields a basis element of $F_2'$, it is clear that $\nu_2$ is a nonzero map, 
so $\nu_\bullet$ is not null-homotopic.
\end{example}

\begin{proof}[Proof of Theorem~\ref{thm:deg:after:reduction}]
Construction~\ref{modif:es} yields 
finitely generated graded Cohen--Macaulay modules $M$ and $M'$ 
that have pure resolutions $F_\bullet$ and $F'_\bullet$ of types $d$ and
$d'$, respectively. 
To construct the desired nonzero map $\psi\colon M'\to M$, 
we fix a generic homogeneous form $h$ on $\PP$ of multidegree
$(0,a-a')$, which exists because $a-a' = \delta - \delta'\in\NN^r$. Multiplication by $h$ induces a map 
$h_\bullet\colon  \KK'_\bullet \to \KK_\bullet$. 
The functoriality of $\pi_*$ induces a map 
$\pi_*\KK'_\bullet\to\pi_*\KK_\bullet$ that, 
upon taking global sections in all twists, yields a map
$\nu_\bullet\colon  F_\bullet' \to F_\bullet$. 
Let $\psi \colon M'\to M$ be the map induced by $\nu_\bullet$. 

To show that $\psi$ is nonzero, it suffices to show that $\nu_\bullet$
is not null-homotopic.  Let $j$ be the index such that $d_j=d_j'$.
Then $F_j$ and $F_j'$ are generated entirely in the same degree.
Since $F_\bullet$ and $F_\bullet'$ are minimal free resolutions,
$\nu_j\colon F_j'\to F_j$ is given by a matrix of scalars.  Thus it
follows that $\nu_\bullet$ is null-homotopic only if $\nu_j$ is the
zero map.  We now use the description of $\nu_j$ given
in~\cite{explicit}*{Lemma 7.3}. (The relevant homological degree in
both $\KK_\bullet$ and $\KK'_\bullet$ is $d_j-d_0$.)

Let $s=\max\{i \mid a_i-d_j+d_0\leq -2\}$ and let $s'=\max\{i \mid
a_i'-d_j'+d_0\leq -2\}$.  Note that, since $d_j=d_j'$, the construction of
$a$ and $a'$ implies that $s=s'$.  We then have
\begin{align*}
  F_j & = S(-d_j)^{\binom{\ell(d)+r}{d_j-d_0}}\otimes \left(
    \bigotimes_{i=1}^s \Hs^1(\PP^1, \cO(a_i-d_j+d_0)) \right) \otimes
  \left( \bigotimes_{i=s+1}^r \Hs^0(\PP^1, \cO(a_i-d_j+d_0)) \right) \; \text{and} \\
  F'_j & = S(-d_j)^{\binom{\ell(d')+r}{d_j-d_0}}\otimes \left(
    \bigotimes_{i=1}^{s} \Hs^1(\PP^1, \cO(a'_i-d_j+d_0)) \right) \otimes
  \left( \bigotimes_{i=s+1}^{r} \Hs^0(\PP^1, \cO(a'_i-d_j+d_0)) \right),
\end{align*}
where both $F_j$ and $F'_j$ have the same number of factors involving
$\Hs^0$ (and therefore also the same number involving $\Hs^1$).  Hence
we can repeatedly apply~\cite{explicit}*{Lemma 7.3} to conclude that
$\nu_j$ is simply the map induced on cohomology by the map
$h_{d_j-d_0}\colon \mathcal K_{d_j-d_0}'\to \mathcal K_{d_j-d_0}$.

We now fix a specific value of $h$ and show that $\nu_{j}\ne 0$. 
Let $c:=a-a'\in \mathbb N^r$ and write $c=(c_1, \dots, c_r)$.  Let
\[
h:=\left(y_0^{(1)} \right)^{c_1} \cdot \left(y_0^{(2)} \right)^{c_2} \cdots \left(y_0^{(r)} \right)^{c_r}
=\mathbf{y}^{\left(\begin{smallmatrix}c_1& \dots& c_r\\ 0&\dots &0\end{smallmatrix}\right)},
\]
so that $h$ is the unique monomial of multidegree $(0,c)$ that
involves only the $y_0^{(i)}$-variables.

For $I=\{i_1<\dots <i_{d_j-d_0}\}$ a subset of either $\{1, \dots, \ell(d)+r\}$ or $\{1,
\dots, \ell(d')+r\}$, we use the notation $\epsilon_I:=\epsilon_{i_1}\wedge \dots
\wedge \epsilon_{d_j-d_0}$ to write $S$-bases for 
$S(-d_j)^{\binom{\ell(d)+r}{d_j-d_0}}$ and for $S(-d_j)^{\binom{\ell(d')+r}{d_j-d_0}}$.
Choose the natural monomial bases for the
cohomology groups appearing in the tensor product expression for $F_j$ and $F_j'$, and write these monomials in matrix
multi-index notation, as in Example~\ref{ex:degseqs}.
For each $i$ corresponding to an $\Hs^1$-term (i.e. $i\in \{1, \dots, s\}$), let $u_i:=-(a_i-d_j+d_0)+1$.  For each $i$ corresponding to an $\Hs^0$ term (i.e. $i\in \{s+1, \dots, r\}$), let $w_i:=-(a_i-d_j+d_0)$.  Observe that
\[
\epsilon_{\{1, \dots, d_j-d_0\}}\otimes \mathbf{y}^{\left(\begin{smallmatrix}u_1& \dots& u_s&w_{s+1}&\dots &w_r\\ -1&\dots&-1&0&\dots&0\end{smallmatrix}\right)}
\]
is a basis element of $F_j$.  We then have that
\begin{align*}
\nu_j
\left(
\epsilon_{\{1, \dots, d_j-d_0\}}\otimes 
\mathbf{y}^{\left(\begin{smallmatrix}u_1& \dots& u_s&w_{s+1}&\dots &w_r\\ -1&\dots&-1&0&\dots&0\end{smallmatrix}\right)}
\right)
& = \epsilon_{\{1, \dots, d_j-d_0\}}\otimes 
\mathbf{y}^{\left(\begin{smallmatrix}u_1& \dots& u_s&w_{s+1}&\dots &w_r\\ -1&\dots&-1&0&\dots&0\end{smallmatrix}\right)}
\cdot h\\
& = \epsilon_{\{1, \dots, d_j-d_0\}}\otimes 
\mathbf{y}^{\left(\begin{smallmatrix}u_1& \dots& u_s&w_{s+1}&\dots &w_r\\ -1&\dots&-1&0&\dots&0\end{smallmatrix}\right)}
\cdot \mathbf{y}^{\left(\begin{smallmatrix}c_1& \dots& c_r\\ 0&\dots &0\end{smallmatrix}\right)}\\
&=
\epsilon_{\{1, \dots, d_j-d_0\}}\otimes 
\mathbf{y}^{\left(\begin{smallmatrix}u_1+c_1& \dots& u_s+c_s&w_{s+1}+c_{s+1}&\dots &w_r+c_r\\ -1&\dots&-1&0&\dots&0\end{smallmatrix}\right)}.
\end{align*}
One may check that this is a basis element of $F_j'$, and hence the
map $\nu_j$ is nonzero. Therefore $\nu_\bullet$ is not null-homotopic,
as desired.
\end{proof}

%%%%%%%%%%%%%%%%%%%%%%%%%%%%%%%%%%%%%%%
\section{Equivariant construction of morphisms between %%%%%%%%%%%
modules with pure resolutions}%%%%%%%%%%%%%%%%%%%%%%%%
%%%%%%%%%%%%%%%%%%%%%%%%%%%%%%%%%%%%%%%
\label{sec:equiv:deg} 

Throughout this section, we assume that $\Bbbk$ is a field of
characteristic 0 and that all degree sequences have length $n$.  Let $V$ be
an $n$-dimensional $\Bbbk$-vector space, and let $S = \Sym(V)$.  We
use $\Sc_\lambda$ to denote a Schur functor, as in
Section~\ref{sec:equiv:root}. As in Section~\ref{sec:construct deg},
a shift of $d'$ reduces the remaining direction of
Theorem~\ref{thm:equivariant:deg} to the following result.
 
\begin{theorem} \label{theorem:eqvtmodules} Let $d \preceq d'$ be two
  degree sequences such that $d_k = d'_k$ for some $k$. Then there
  exist finite length  $\GL(V)$-equivariant $S$-modules $M$ and $M'$ 
  with pure resolutions of types $d$ and $d'$, respectively, with 
  $\Hom_{\GL(V)}(M',M)_0 \ne 0$.
\end{theorem}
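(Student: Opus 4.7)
The strategy mirrors the two-step template used for Theorem~\ref{thm:deg:after:reduction}: first build equivariant pure resolutions $F_\bullet$ and $F'_\bullet$ of types $d$ and $d'$ that sit inside a common combinatorial framework, and then write down an equivariant chain map $\nu_\bullet \colon F'_\bullet \to F_\bullet$ whose component $\nu_k$ is a nonzero map of generators in degree $d_k = d'_k$, so that the induced $\psi \colon M' \to M$ is a nonzero element of $\Hom_{\GL(V)}(M',M)_0$. The equivariant resolutions produced in~\cite{efw} are indexed by a ``starting'' partition $\mu$: for each such $\mu$, one obtains $F_\bullet(d,\mu)$ with $i$-th term of the form $\Sc_{\lambda_i(d,\mu)}(V)\otimes_{\Bbbk} S(-d_i)$ and with differentials assembled out of Pieri multiplication maps $\Sc_{\lambda_i(d,\mu)}(V)\otimes V \to \Sc_{\lambda_{i-1}(d,\mu)}(V)$.

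The first step is to exploit the equality $d_k = d'_k$ to choose $\mu$ and $\mu'$ so that $\lambda_k(d,\mu) = \lambda_k(d',\mu')$. The default EFW choice does not achieve this, and here is where the ``minor modification'' from the introduction enters: we add a common rectangular block of rows or columns to both partitions so that, at homological degree $k$, the two resolutions share a Schur summand. Once the partitions are aligned at position $k$, there is a candidate chain map $\nu_\bullet$ whose construction is forced by the representation theory: $\nu_k$ is the identity on the shared Schur factor tensored with the identity on $S(-d_k)$, while for $i > k$ the component $\nu_i$ is built from the unique (up to scalar) sequence of Pieri operators carrying $\lambda_i(d',\mu')$ to $\lambda_i(d,\mu)$, and symmetrically for $i < k$ using the transpose-Pieri maps. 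Each $\nu_i$ has internal degree $0$ because the Schur adjustments add no polynomial degree.

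The main obstacle, and the technical heart of the argument, is to show that $\nu_\bullet$ actually commutes with the differentials, i.e., $d_i \circ \nu_i = \nu_{i-1} \circ d'_i$ for every $i$. Both sides are compositions of Pieri operators applied to explicit Schur modules, but they propagate boxes in different orders, and the identity is not formal because the differentials encode the $\Sym(V)$-module structure while $\nu_\bullet$ is dictated by the $\GL(V)$-structure. The plan is to reduce this to a combinatorial identity among iterated Pieri maps of the type analyzed in~\cite{sam}, where an associativity-style relation for Pieri and transpose-Pieri operators is established; the required commutation will follow by reading off this relation on each isotypic component. This is the only nonformal step.

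Once the chain-map identity is in hand, nonvanishing of $\psi$ is automatic: at position $k$, both $F_k$ and $F'_k$ are generated in the single degree $d_k$, so $\nu_k$ is a matrix of scalars between minimal free modules, and hence $\nu_\bullet$ can be null-homotopic only if $\nu_k = 0$. Since $\nu_k$ is nonzero by construction, $\psi$ is a nonzero equivariant degree-$0$ map, completing the proof.
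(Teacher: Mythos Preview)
Your high-level template is right, and the endgame---check that the component at position $k$ is a nonzero scalar map between free modules generated in a single degree, so no null-homotopy can kill it---is exactly what the paper does. But the ``alignment'' step is where your proposal diverges from the paper and where it runs into a genuine obstruction.

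You propose to choose starting partitions $\mu,\mu'$ so that $\lambda_k(d,\mu)=\lambda_k(d',\mu')$, by ``adding a common rectangular block.'' This cannot work in general. In the EFW construction the term $F_j$ is a single irreducible $\Sc_{\lambda(d)_j}V\otimes S(-d_j)$, and the only freedom in the choice of starting data is a global shift of all $\lambda(d)_j$ by the same constant vector (equivalently, tensoring the whole resolution by a power of $\det V$). But Remark~\ref{remark:eqvtefw}, iterated over all positions where $d$ and $d'$ differ, shows that $\lambda(d)_k$ and $\lambda(d')_k$ typically differ by single boxes removed from \emph{several different rows}; their difference is not a constant vector, so no rectangular shift of either side can make them equal. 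In particular there is no choice of $\mu,\mu'$ for which $\nu_k$ can be ``the identity on the shared Schur factor,'' and the rest of your construction (unique Pieri operators filling in $\nu_i$ for $i\ne k$) never gets off the ground.

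The paper's ``minor modification'' is not a change of starting partition; it is to replace $M(d')$ by $M(d')\otimes V^{\otimes r}$ with $r=\sum_j(d'_j-d_j)$. This makes the $k$th term of the source reducible, and $\Sc_{\lambda(d)_k}V$ genuinely appears as a Pieri summand of $\Sc_{\lambda(d')_k}V\otimes V^{\otimes r}$. The paper then does not build the chain map in one shot: it interpolates a chain of degree sequences $d=d^0\prec d^1\prec\cdots\prec d^r=d'$ each differing from the next in a single entry, proves the single-step case as Lemma~\ref{lemma:eqvtmodules} (this is where the delicate commutativity check with \cite{sam} actually happens, in three separate cases $i=0$, $i=1$, $i\ge 2$), and composes. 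The nonvanishing argument is then that each single-step map is \emph{surjective} at position $k$ (a Pieri projection onto an irreducible), so the composite is surjective there, hence nonzero. Your instinct that Pieri-map identities from \cite{sam} control the commutation is correct, but they are deployed one box at a time, not across the whole gap $d'\!-\!d$ at once.
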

Our proof of Theorem~\ref{theorem:eqvtmodules} relies on
Lemma~\ref{lemma:eqvtmodules}, which handles the special case when the
degree sequences $d$ and $d'$ differ by $1$ in a single position. This
proof will repeatedly appeal to Pieri's rule for decomposing the
tensor product of a Schur functor by a symmetric power. We refer the
reader to \cite[\S1.1 and Theorem 1.3]{sam} for a statement of this
rule, as our main use of it will be through \cite[Lemma 1.6]{sam}.
%We then use this lemma to prove
%Corollary~\ref{corollary:eqvtmodules}, which handles the case when $d$
%and $d'$ differ by an arbitrary amount in a single position.  Finally,
%we use this corollary to prove Theorem~\ref{theorem:eqvtmodules}.

Given a degree sequence $d$, let $M(d)$ be the $\GL(V)$-equivariant
graded $S$-module constructed in~\cite{efw}*{\S3} (see also
\cite{sam}*{\S2.1}), and let ${\bf F}(d)_\bullet$ be its
$\GL(V)$-equivariant free resolution. By construction, the generators
for each $S$-module ${\bf F}(d)_j$ form an irreducible $\GL(V)$-module
whose highest weight we call $\lambda(d)_j$.  For instance, if
$d=(0,2,5,7,8)$, then $\lambda(d)_0=(3,1,0,0)$ and
$\lambda(d)_1=(5,1,0,0)$~\cite{efw}*{Example~3.3}.  Note that
$M(d)\otimes V$ is also an equivariant module with a pure resolution
of type $d$.
  
\begin{lemma} 
\label{lemma:eqvtmodules} 
Let $d = (d_0, \dots, d_n) \in \ZZ^{n+1}$ be a degree sequence, and
let $d'$ be the degree sequence obtained from $d$ by replacing $d_i$
by $d_i+1$ for some $i$. Then there exists an equivariant nonzero morphism 
$\phi \colon M(d')\otimes V\to M(d)$.

Further, if $F_\bullet$ and $F_\bullet'$ are the minimal free resolutions of $M(d)$ and $M(d')\otimes V$ respectively,
then we may choose $\phi$ so that the induced map $F_j'\to F_j$ is surjective for all $j\ne i$.
\end{lemma}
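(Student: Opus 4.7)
The plan is to work directly with the explicit EFW construction of $M(d)$ from \cite{efw}*{\S3} and match up generating representations via Pieri's rule. Recall that each $\GL(V)$-irreducible module of generators $G_j(d) \subset {\bf F}(d)_j$ has highest weight $\lambda(d)_j$, and that the differentials in ${\bf F}(d)_\bullet$ are given by Pieri maps $\Sc_{\lambda(d)_j}(V) \otimes \Sym^{d_j - d_{j-1}}(V) \to \Sc_{\lambda(d)_{j-1}}(V)$. I would first write out $\lambda(d)_j$ and $\lambda(d')_j$ explicitly in terms of $d$, $d'$ and observe that, because $d$ and $d'$ differ only in position $i$, the partitions $\lambda(d')_j$ and $\lambda(d)_j$ differ by exactly one box: adding a single box transforms $\lambda(d')_j$ into $\lambda(d)_j$ in a predictable row, determined by whether $j < i$ or $j \geq i$.

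The second step is to build $\phi$ on generators. By Pieri's rule, $\Sc_{\lambda(d')_j}(V) \otimes V$ decomposes as a sum of $\Sc_\mu(V)$ over partitions $\mu$ obtained from $\lambda(d')_j$ by adding one box, and by the previous step $\lambda(d)_j$ is one such $\mu$. Hence there is a canonical (up to scalar) nonzero equivariant projection $\pi_j \colon \Sc_{\lambda(d')_j}(V) \otimes V \twoheadrightarrow \Sc_{\lambda(d)_j}(V)$. Taking $\phi_0 = \pi_0$ on the generators of $M(d') \otimes V$ defines the candidate morphism on degree $d_0$ (or $d_0 + 1$, if $i = 0$); I would then lift $\phi_0$ by tensoring with $S$ to obtain a map $F_0' \to F_0$ of free modules and set $\phi_j = \pi_j \otimes \id_S$ componentwise for all $j$.

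The main obstacle, and the one requiring real work, is verifying that this collection $\{\phi_j\}$ is a chain map, i.e.\ that $\phi_{j-1} \circ d_j' = d_j \circ \phi_j$. Both sides live in $\Hom_{\GL(V)}\bigl(\Sc_{\lambda(d')_j}(V) \otimes V, \Sc_{\lambda(d)_{j-1}}(V) \otimes \Sym^{d_j - d_{j-1}}(V)\bigr)$ (after identifying the appropriate pieces using that the degrees match thanks to $d_j = d_j' - [i = j]$, which must be handled carefully around $j = i, i+1$). The key identity reduces to a commutativity statement about Pieri maps: adding a box and then multiplying by a symmetric power equals multiplying by the symmetric power and then adding a box, in the appropriate sense. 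This is precisely the setting of \cite{sam}*{Lemma 1.6}, which gives explicit formulas for compositions of Pieri maps in terms of rational functions on rows; the compatibility we need becomes an identity of such rational functions that can be verified by a direct calculation. Once the chain map property is established, $\phi$ induces a nonzero map on modules because $\phi_0$ is already nonzero on generators, which cannot be made zero by any homotopy since $F_{-1}' = 0$.

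Finally, the surjectivity statement: for $j \neq i$, the box-addition that takes $\lambda(d')_j$ to $\lambda(d)_j$ is the same as for $j - 1$ and $j + 1$ in the sense dictated by the differential, and by construction $\pi_j$ is the Pieri projection, hence surjective; tensoring with $S$ preserves surjectivity. At $j = i$, however, the partition change is more drastic (the boxes move in a different row, reflecting that the degree $d_i$ actually jumped), and surjectivity of $\phi_i$ may fail, which is why the statement excludes $j = i$. This exclusion will not obstruct the subsequent induction in the proof of Theorem~\ref{theorem:eqvtmodules}, where one chains together several one-step modifications and tracks where generators survive.
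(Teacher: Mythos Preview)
Your approach is close to the paper's in spirit---both rely on Pieri projections and \cite{sam}*{Lemma~1.6}---but there is a concrete error about the $j=i$ case, and your strategy for building the chain map differs from the paper's in a way that matters.

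First, the error. At $j=i$ the partitions $\lambda(d')_i$ and $\lambda(d)_i$ are \emph{equal}, not different (Remark~\ref{remark:eqvtefw}(i)); what changes is the degree, since $d'_i=d_i+1$. Hence $F'_i=S(-d_i-1)\otimes\Sc_{\lambda(d)_i}V\otimes V$ and $F_i=S(-d_i)\otimes\Sc_{\lambda(d)_i}V$, and the map $F'_i\to F_i$ must have $S$-degree~$1$: it is the multiplication $V\otimes\Sc_{\lambda(d)_i}V\to\Sym^1V\otimes\Sc_{\lambda(d)_i}V\subset S\otimes\Sc_{\lambda(d)_i}V$, not a Pieri projection. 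So your uniform recipe $\phi_j=\pi_j\otimes\id_S$ breaks precisely at $j=i$, and the squares adjacent to position~$i$ mix a multiplication map with a Pieri projection. This is where the actual work lies, and it forces a case analysis you omit.

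Second, the paper does not attempt to build the full chain map. It constructs only the presentation square (diagram~\eqref{eqn:square}, positions $0$ and~$1$), checks that \emph{one} square commutes up to scalar via \cite{sam}*{Lemma~1.6}, and obtains $\phi$ on cokernels; the rest of the chain map is then just a lift. Because the nature of the square depends on where $i$ sits relative to positions $0$ and~$1$, the paper splits into cases: $i=1$ (multiplication on one vertical, projection on the other), $i\ge2$ (projections on both), and $i=0$, which is handled by applying the duality $\Ext^n(-,S)$ to reduce to the case $i=n\ge2$.

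Finally, your argument that $\phi\ne0$ ``since $F'_{-1}=0$'' is not the right reason: what is needed is that $\phi_0$ does not factor through $d_1\colon F_1\to F_0$, which follows from minimality when $\phi_0$ is nonzero modulo~$\mathfrak m$. For $i=0$ this fails, because $\phi_0$ lands in $\mathfrak mF_0$. The paper instead deduces $\phi\ne0$ from surjectivity of $F'_j\to F_j$ at some $j\ne i$, exactly as in your final paragraph: the map $\Sc_{\lambda(d')_j}V\otimes V\to\Sc_{\lambda(d)_j}V$ is nonzero equivariant with irreducible target, hence surjective, and minimality of $F_\bullet$ then rules out a null-homotopy.
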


\begin{remark} \label{remark:eqvtefw} Let $d$ and $d'$ be degree sequences
as in the statement of Lemma~\ref{lemma:eqvtmodules}.  We observe that 
  \begin{enumerate}[(i)]
  \item $\lambda(d')_i = \lambda(d)_i$.
  \item If $j < i$, then $\lambda(d')_j$ is obtained from
    $\lambda(d)_j$ by removing a box from the $i$th part.
  \item If $j > i$, then $\lambda(d')_j$ is obtained from
    $\lambda(d)_j$ by removing a box from the $(i+1)$st part.
  \end{enumerate}
For instance, if $d=(0,2,4)$ and $d'=(0,3,4)$, then we have 
\[
\lambda(d)_j=\begin{cases}
(1,0) & \text{ if } j=0\\
(3,0) & \text{ if } j=1\\
(3,2) & \text{ if } j=2
\end{cases}
\quad \text{ and } \quad
\lambda(d')_j=\begin{cases}
(0,0) & \text{ if } j=0\\
(3,0) & \text{ if } j=1\\
(3,1) & \text{ if } j=2.
\end{cases} \qedhere
\]
\end{remark}

\begin{remark} In the proof of Lemma~\ref{lemma:eqvtmodules}, we
  repeatedly use~\cite{sam}*{Lemma 1.6}. The statement of the lemma is
  for factorizations of Pieri maps into simple Pieri maps $\Sc_\nu V
  \to \Sc_\eta V \otimes V$, but we need to factor into simple Pieri
  maps as well as simple co-Pieri maps $\Sc_\eta V \otimes V \to
  \Sc_\nu V$. No modification of the proof is needed: we simply use
  the fact that the composition of a co-Pieri map and a Pieri map of
  the same type is an isomorphism and that in each case that we
  apply~\cite{sam}*{Lemma 1.6}, the Pieri maps may be factored so that
  the simple Pieri maps and simple co-Pieri maps of the same type
  appear consecutively.
\end{remark}

\begin{proof}[Proof of Lemma~\ref{lemma:eqvtmodules}] 
  Set $\lambda_\ell = \sum_{j=\ell}^{n-1} (d_{j+1} - d_j - 1)$ for $1
  \le \ell \le n-1$, $\lambda_n = 0$, $\mu_1 = \lambda_1 + d_1 - d_0$,
  and $\mu_\ell = \lambda_\ell$ for $1 \le \ell \le n$. If $i = n$, we
  modify $\lambda$ and $\mu$ by adding 1 to all of its parts (so in
  particular, $\lambda_n = \mu_n = 1$).  As in~\cite{efw}*{\S3},
  define $M$ to be the cokernel of the Pieri map
  \[
  \psi_{\mu / \lambda} \colon  S(-d_1) \otimes \Sc_\mu V \to S(-d_0)
  \otimes \Sc_\lambda V. 
  \]
  We will choose partitions $\lambda'$ and $\mu'$ so that
  $M'$ is the cokernel of the Pieri map
  \[
  \psi_{\mu' / \lambda'} \colon  S(-d'_1) \otimes \Sc_{\mu'} V \to
  S(-d'_0) \otimes \Sc_{\lambda'} V.
  \]
  To do this, we separately consider the three cases $i=0$, $i=1$, and $i \ge
  2$. In each case, we specify $\lambda'$ and $\mu'$ (these
  descriptions are special cases of Remark~\ref{remark:eqvtefw}) and
  construct a commutative diagram of equivariant degree 0 maps
  \begin{equation} \label{eqn:square} \xymatrix{ S(-d_1) \otimes
      \Sc_\mu V \ar[rr]^-{\psi_{\mu/\lambda}}
      & & S(-d_0) \otimes \Sc_\lambda V \\
      S(-d'_1) \otimes \Sc_{\mu'} V \otimes V \ar[rr]^-{ \psi_{\mu' /
          \lambda'} \otimes 1_V} \ar[u]^-{\phi_\mu} & & S(-d'_0)
      \otimes \Sc_{\lambda'} V \otimes V \ar[u]^-{\phi_\lambda} }
  \end{equation}
  that induces an equivariant degree 0 map of the cokernels $\phi
  \colon  M' \to M$. Since the Pieri maps are only
  well-defined up to a choice of nonzero scalar, we only prove
  that the square commutes up to a choice of nonzero scalar. 
  One may scale appropriately to obtain strict commutativity. 
%  
%To conclude that the induced map $\phi\colon M'=M(d')\to M$ is
%  nonzero, we first show that $\phi_\lambda$ is nonzero and
%  then use a degree argument to show that $\phi_\lambda$ cannot factor
%  through $S(-d_1) \otimes \Sc_\mu V$.
  
Finally, after handling the three separate cases, we prove that the induced maps 
$F_j'\to F_j$ are surjective whenever $j\ne i$.  Since $F_\bullet'$ is a minimal
free resolution, this implies that the map $F_\bullet'\to F_\bullet$ is not null-homotopic,
and hence $\phi \colon M'\to M$ is nonzero.

  % \noindent \emph{Case $i=0$.} Set $\lambda = \lambda'$, $\mu'_1 =
  % \mu_1 - 1$, and $\mu'_j = \mu_j$ for $2 \le j \le n$. Let $\phi_\mu$
  % be a projection of $\Sc_{\mu'} V \otimes V$ onto $\Sc_\mu V$
  % tensored with the identity of $S(-d_1)$. The map $\phi_\lambda$
  % needs to map the generators $\Sc_{\lambda} V \otimes V$ of ${\bf
  %   F}'_0$ to $\Sym^1 V \otimes \Sc_\lambda V$. These representations
  % are isomorphic and multiplicity free, so we just need to say whether
  % each summand gets sent to 0 or not. We can write $\Sc_\lambda V
  % \otimes V$ as a direct sum of $\Sc_\nu V$ where $\nu$ are the
  % partitions such that $\nu_k = \lambda_k + 1$ for exactly one $k$,
  % and $\nu_j = \lambda_j$ for $j \ne k$. Send this summand to 0 if and
  % only if $k>1$. From the degree $d_1$ part of \eqref{eqn:square}, we
  % obtain
  % \[
  % \xymatrix{ \Sc_\mu V \ar[r]^-\alpha & \Sym^{d_1 - d_0} V \otimes
  %   \Sc_\lambda V \\
  %   \Sc_{\mu'} V \otimes V \ar[r]^-\delta \ar[u]^-\beta & \Sym^{d_1 -
  %     d_0 - 1} V \otimes \Sc_{\lambda} V \otimes V \ar[u]^-\gamma. }
  % \]
  % Let $\Sc_\nu V$ be a direct summand of $\Sc_{\mu'} V \otimes V$. If
  % $\nu \ne \mu$, then its image under $\alpha \beta$ is 0 because it
  % factors through $\Sc_\mu V$, and its image under $\gamma \delta$ is
  % 0 by our definition of $\phi_\lambda$. If $\nu = \mu$, then
  % by~\cite{sam}*{Lemma~1.6}, its image under both is nonzero, and we
  % can choose nonzero scalars to make these maps agree. Hence this
  % diagram commutes and so does \eqref{eqn:square}.

  ~

  \noindent \emph{Case $i=1$.} Set $\lambda'_1 = \lambda_1 - 1$,
  $\lambda'_j = \lambda_j$ for $2 \le j \le n$, and $\mu' =
  \mu$. Also, let $d'_0 = d_0$ and $d'_1 = d_1 + 1$. Using the
  notation of \eqref{eqn:square}, we define $\phi_\mu$ by identifying
  $\Sc_{\mu'} V \otimes V$ with $\Sym^1 V \otimes \Sc_\mu V$ and then
  extending it to an $S$-linear map. Let $\phi_\lambda$ be 
  the projection of $\Sc_{\lambda'} V \otimes V \to \Sc_\lambda V$
  tensored with the identity of $S(-d_0)$. From the degree $d_1 + 1$
  part of \eqref{eqn:square}, we obtain
  \[
  \xymatrix{ \Sym^1 V \otimes \Sc_\mu V \ar[r]^-\alpha & \Sym^{d_1 -
      d_0 + 1} V  \otimes \Sc_\lambda V \\
    \Sc_\mu V \otimes V \ar[r]^-\delta \ar[u]^-\beta & \Sym^{d_1 - d_0
      + 1} V \otimes \Sc_{\lambda'} V \otimes V \ar[u]^-\gamma. }
  \]
  Note that $\alpha$ is the linear part of ${\bf F}_1 \to {\bf F}_0$
  and is hence injective because $d_2 - d_1 > 1$. 
  Since $\beta$ is an isomorphism, $\alpha \beta$ is injective. 
  Also we have $\lambda_1 >
  \lambda_2$ because $d_2 - d_1 > 1$, so by Pieri's rule, every summand
  of $\Sc_\mu V \otimes V$ is also a summand of $\Sym^{d_1 - d_0 + 1}
  V \otimes \Sc_\lambda V$. Using~\cite{sam}*{Lemma~1.6}, one can show
  that $\gamma \delta$ is also injective. Since the tensor product
  $\Sym^{d_1 - d_0 + 1} V \otimes \Sc_\lambda V$ is multiplicity-free
  by the Pieri rule, this implies that these maps are equal after
  rescaling the image of each direct summand of $\Sc_\mu V \otimes V$
  by some nonzero scalar. Hence this diagram is commutative, and the
  same is true for \eqref{eqn:square}.

  ~

  \noindent \emph{Case $i \ge 2$.} Set $\lambda'_i =
  \lambda_i - 1$ and $\lambda_j = \lambda_j$ for $j \ne i$. 
  Similarly, set $\mu'_i = \mu_i - 1$ and $\mu'_j = \mu_j$ for $j \ne i$. 
  Using the notation of \eqref{eqn:square}, let $\phi_\mu$ 
  be a nonzero projection of $\Sc_{\mu'} V \otimes V$ onto $\Sc_\mu V$
  tensored with the identity on $S(-d_1)$. Similar to the previous case, 
  choose a nonzero projection $\Sc_{\lambda'} V \otimes V \to \Sc_\lambda V$
  and tensor it with the identity map on $S(-d_0)$ to get
  $\phi_\lambda$. From the degree $d_1$ part of
  \eqref{eqn:square}, we obtain
  \[
  \xymatrix{ \Sc_\mu V \ar[r]^-\alpha & \Sym^{d_1 - d_0} V \otimes
    \Sc_\lambda V \\
    \Sc_{\mu'} V \otimes V \ar[r]^-\delta \ar[u]^-\beta & \Sym^{d_1 -
      d_0} V \otimes \Sc_{\lambda'} V \otimes V \ar[u]^-\gamma. }
  \]
  Let $\Sc_\nu V$ be a direct summand of $\Sc_{\mu'} V \otimes V$. If
  $\nu \ne \mu$, then $\Sc_\nu V$ is not a summand of $\Sym^{d_1 -
    d_0} V \otimes \Sc_\lambda V$, as otherwise we would have $\nu_i =
  \lambda_i - 1$, and both of the compositions $\alpha \beta$ and $\gamma
  \delta$ would therefore be 0 on such a summand. If $\nu = \mu$, then the
  composition $\alpha \beta$ is nonzero, so it is enough to check that
  the same is true for $\gamma \delta$; this holds
  by~\cite{sam}*{Lemma~1.6}, and hence this diagram and
  \eqref{eqn:square} are commutative.
  
  ~
  
  \noindent \emph{Case $i=0$.} Set $d^\vee:= (-d_n, -d_{n-1}, \dots,
  -d_0)$ and $d'^{\vee}:=(-d'_n, -d'_{n-1}, \dots, -d'_0)$.  Since
  $d_j=d_j'$ for all $j\ne i=0$, we see that $d^\vee$ and $d'^\vee$
  only differ in position $n$.  Hence, by the case $i\geq 2$ above (we
  assume that $n \ge 2$ since the $n=1$ case is easily done directly),
  we have finite length modules $M(d^\vee)$ and $M(d'^{\vee})$ with
  pure resolutions of types $d^\vee$ and $d'^\vee$, respectively,
  along with a nonzero morphism $\psi \colon M(d^\vee)\otimes V \to
  M(d'^\vee)$.  If we define $N^\vee:= \Ext^n(N,S),$ then
  $M(d'^\vee)^\vee \cong M(d')$ and $(M(d^\vee)\otimes V)^\vee \cong
  M(d) \otimes V^*$ (both isomorphisms are up to some power of
  $\bigwedge^n V$ which we cancel off). In addition, since
  $\Ext^n(-,S)$ is a duality functor on the space of finite length
  $S$-modules, we obtain a nonzero map
  \[
  \psi^\vee \colon M(d')\to M(d)\otimes V^*.
  \]
  By adjunction, we then obtain a nonzero map $M(d')\otimes V\to
  M(d)$.

  ~
    
  Fixing some $j\ne i$, we now prove the surjectivity of the maps $F_j'\to F_j$, which implies that $\phi$ is a nonzero morphism, as observed above.  The key observation is that,
  in each of the above three cases, $F_j$ is an irreducible Schur module.  Since $d_j=d_j'$, the map
  \[
  F_j'=S(-d'_j)\otimes \Sc_{\lambda(d')_j}V\otimes V\to F_j=S(-d_j)\otimes \Sc_{\lambda(d)_j}V
  \]
  is induced by a nonzero equivariant map $\Sc_{\lambda(d')_j}V\otimes V\to  \Sc_{\lambda(d)_j}V$.  Since the target
  is an irreducible representation, this morphism, and hence the map $F_j'\to F_j$, is surjective.  More specifically, the map  $\Sc_{\lambda(d')_j}V\otimes V\to  \Sc_{\lambda(d)_j}V$ is a projection onto one of the factors in the Pieri rule decomposition of $\Sc_{\lambda(d')_j}V\otimes V$.
\end{proof}

\begin{example} \label{example:eqvtres1}
  This example illustrates the construction of
  Lemma~\ref{lemma:eqvtmodules} when $d=(0,2,4)$ and $d'=(0,3,4)$.
  When writing the free resolutions, we simply write the Young
  diagram of $\lambda$ in place of the corresponding graded 
  equivariant free module. Also, we follow the conventions in
  \cite{efw} and \cite{sam} and draw the Young diagram of $\lambda$ by
  placing $\lambda_i$ boxes in the $i$th {\it column}, rather than the
  usual convention of using rows. The morphism from
  Lemma~\ref{lemma:eqvtmodules} yields a map of complexes, which we write as  
\[
\begin{CD}
  @.M@<<< {\tiny \tableau[scY]{|}} @<<< {\tiny \tableau[scY]{|||}}
  @<<< {\tiny
    \tableau[scY]{,|,||}} @<<< 0 \\
  @. _{\psi} @AAA@AAA @AAA @AAA  \\
  @. M'@<<< {\tiny \tableau[scY]{|}}\otimes \varnothing @<<< {\tiny
    \tableau[scY]{|}} \otimes {\tiny \tableau[scY]{|||}} @<<< {\tiny
    \tableau[scY]{|}} \otimes {\tiny \tableau[scY]{,|||}} @<<< 0.\\
\end{CD}
\]
Observe that $d_2=4=d_2'$ and that the vertical arrow in homological position $2$ is surjective, as it corresponds to a Pieri rule projection.  A similar statement holds in position $0$.
\end{example}

\begin{proof}[Proof of Theorem~\ref{theorem:eqvtmodules}] 
Set $r:=\sum_{j=0}^n d_j'-d_j$.  We may construct a sequence of degree sequences
$d=:d^0<d^1<\dots <d^r:=d'$ such that $d^j$ and $d^{j+1}$ satisfy the hypotheses of
Lemma~\ref{lemma:eqvtmodules} for any $j$.  
Lemma~\ref{lemma:eqvtmodules} yields a nonzero morphism
\[
\phi^{(j+1)} \colon M(d^{j+1})\otimes V\to M(d^j)
\]
for any $j=1, \dots, r$.
If we set $M^{(j)}:=M(d^j)\otimes V^{\otimes j}$, and we set $\psi^{(j+1)}$ to be the natural map
\[
\psi^{(j+1)} \colon M^{(j+1)}\to M^{(j)}
\]
given by $\phi^{(j)}\otimes \text{id}_V^{\otimes j}$, then we may compose the map $\psi^{(j+1)}$ with
the map $\psi^{(j)}$.

Let $M:=M^{(0)}=M(d),$ and let $M':=M^{(r)}=M(d')\otimes V^{\otimes r}$.
We then have an equivariant map $\psi:=\psi^{(1)}\circ \dots \circ
\psi^{(r)} \colon M'\to M$, and we must finally show that $\psi$ is
nonzero.  Let $F^{(j)}_\bullet$ be the minimal free resolution of
$M^{(j)}$.  Since $d_k=d_k'$, it follows that $d^{(j)}_k=d^{(j+1)}_k$
for all $j$.   Lemma~\ref{lemma:eqvtmodules} then implies that we can
choose each $\phi^{(j+1)}$ such that the map $\psi^{(j+1)}$ induces a
surjection $F^{(j+1)}_k\to F^{(j)}_k$.  Since the composition of
surjective maps is surjective, it follows that the map $F^{(r)}_k\to
F^{(0)}_k$ induced by $\psi$ is surjective.  Since $F^{(0)}_\bullet$
is a minimal free resolution, we conclude that the map of
complexes $F^{(r)}_\bullet\to F^{(0)}_\bullet$ is not 
null-homotopic, and hence $\psi \colon M'\to M$ is a nonzero morphism.
\end{proof}

\begin{remark}\label{rmk:simpler}
  By introducing a variant of Lemma~\ref{lemma:eqvtmodules}, we may
  simplify the construction used in the proof of
  Theorem~\ref{theorem:eqvtmodules}.  Let $d$ and $d'$ be two degree
  sequences such that $d_i'=d_i+N$, and $d_j'=d_j$ for all $j\ne i$.
  Iteratively applying Lemma~\ref{lemma:eqvtmodules} yields a morphism
  $\phi \colon M(d')\otimes V^{\otimes N}\to M(d)$.  Since
  $\ch(\Bbbk)=0$, we have an inclusion $\iota \colon \Sym^N V\to
  V^{\otimes N}$, and we let $\psi$ be the morphism induced by
  composing $\phi$ and $\text{id}_{M(d')}\otimes \iota$.  Let
  $F_\bullet'$ and $F_\bullet$ be the minimal free resolutions of
  $M(d')\otimes \Sym^N V$ and $M(d)$ respectively.  The map $F_j'\to
  F_j$ induced by $\psi$ is induced by the equivariant map of vector spaces
\[
\Sc_{\lambda(d')}V\otimes \Sym^NV\to \Sc_{\lambda(d)}V.
\]
This map is surjective because it is a projection onto one of the factors in the Pieri rule decomposition of
$\Sc_{\lambda(d')}V\otimes \Sym^NV$.  

This simplifies the proof of Theorem~\ref{theorem:eqvtmodules} as follows.  Let $i_1 > \cdots > i_\ell$ be the indices for which $d$ and $d'$ differ. By iteratively applying the construction outlined in this remark, we may  construct the desired modules and nonzero morphism in $\ell$ steps.  Since $\ell$ can be far smaller than $r:=\sum_{j=0}^n d_j'-d_j$, this variant is useful for computing examples such as Example~\ref{ex:big eqvt}.
\end{remark}

\begin{example}\label{ex:big eqvt} 
We illustrate Theorem~\ref{theorem:eqvtmodules}
with $n=4$, $d = (0,2,3,6,7)$, and $d' = (1,2,5,6,10)$.  Using the notation
  of Remark~\ref{rmk:simpler}, $d^{(1)} = (0,2,3,6,10)$, $d^{(2)} = (0,2,5,6,10)$. 
  Following the same conventions as in Example~\ref{example:eqvtres1}, 
  the corresponding resolutions are given in Figure~\ref{fig:big diagram}.
    Notice that $d_3=6=d_3'$.  Focusing on the third terms of the 
  resolutions, we see that the maps are simply projections from 
  Pieri's rule. In particular, these maps are surjective and therefore 
  nonzero.
\qedhere

\begin{landscape}
\begin{figure}
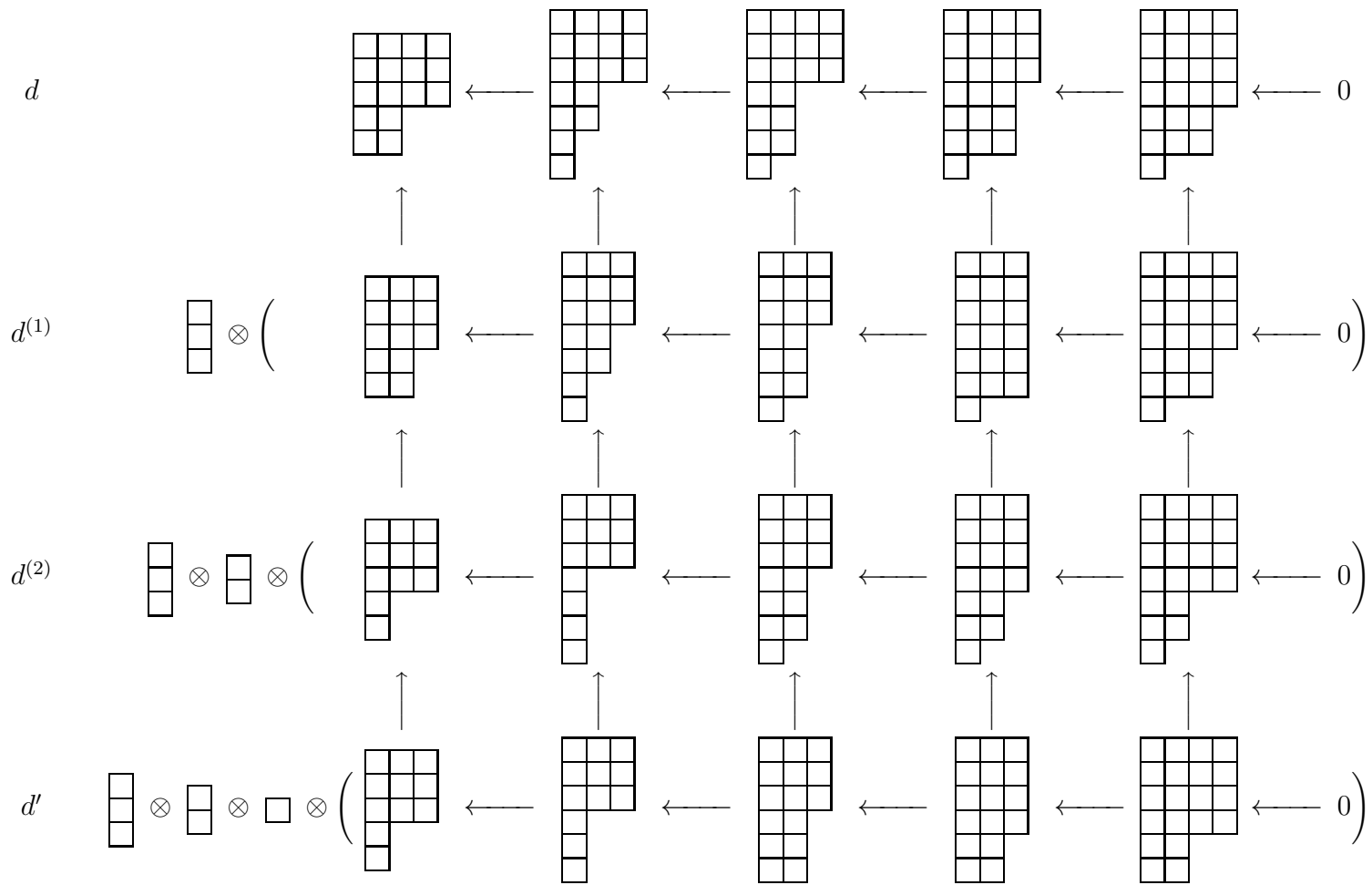

\vspace*{1cm}
  \[
  \begin{CD}
    d \quad \quad @. @.  {\tiny \tableau[scY]{,,,|,,,|,,,|,|,}} @<<<
    {\tiny \tableau[scY]{,,,|,,,|,,,|,|,|||}} @<<< {\tiny
      \tableau[scY]{,,,|,,,|,,,|,|,|,||}} @<<< {\tiny
      \tableau[scY]{,,,|,,,|,,,|,,|,,|,,||}} @<<< {\tiny
      \tableau[scY]{,,,|,,,|,,,|,,,|,,|,,||}} @<<< 0 \\
    @. @. @AAA @AAA @AAA @AAA @AAA \\
    d^{(1)} \quad \quad @. {\tiny \tableau[scY]{|||}} \ \otimes \bigg(
    @. {\tiny \tableau[scY]{,,|,,|,,|,|,}} @<<<{\tiny
      \tableau[scY]{,,|,,|,,|,|,|||}} @<<< {\tiny
      \tableau[scY]{,,|,,|,,|,|,|,||}} @<<< {\tiny
      \tableau[scY]{,,|,,|,,|,,|,,|,,||}} @<<< {\tiny
      \tableau[scY]{,,,|,,,|,,,|,,,|,,|,,||}} @<<< 0 @. \bigg) \\
    @. @. @AAA @AAA @AAA @AAA @AAA \\
    d^{(2)} \quad \quad @. {\tiny \tableau[scY]{|||}} \ \otimes \
    {\tiny \tableau[scY]{||}} \ \otimes \bigg( @. {\tiny
      \tableau[scY]{,,|,,|,,|||}} @<<<{\tiny
      \tableau[scY]{,,|,,|,,|||||}} @<<< {\tiny
      \tableau[scY]{,,|,,|,,|,|,|,||}} @<<< {\tiny
      \tableau[scY]{,,|,,|,,|,,|,|,||}} @<<< {\tiny
      \tableau[scY]{,,,|,,,|,,,|,,,|,|,||}} @<<< 0 @. \bigg) \\
    @. @. @AAA @AAA @AAA @AAA @AAA \\
    d' \quad \quad @. {\tiny \tableau[scY]{|||}} \ \otimes \ {\tiny
      \tableau[scY]{||}} \ \otimes \ {\tiny \tableau[scY]{|}} \
    \otimes \bigg( @. {\tiny \tableau[scY]{,,|,,|,,|||}} @<<<{\tiny
      \tableau[scY]{,,|,,|,,||||}} @<<< {\tiny
      \tableau[scY]{,,|,,|,,|,|,|,|}} @<<< {\tiny
      \tableau[scY]{,,|,,|,,|,,|,|,|}} @<<< {\tiny
      \tableau[scY]{,,,|,,,|,,,|,,,|,|,|}} @<<< 0 @. \bigg) \\
  \end{CD}
  \]
  \smallskip
  \caption{The Young diagram depictions of the resolutions in
    Example~\ref{ex:big eqvt}.}
  \label{fig:big diagram}
  \end{figure}
\end{landscape}
\end{example}

%%%%%%%%%%%%%%%%%%%%%%%%%%%%%%%%%%%%%%%
\section{The poset of root sequences}%%%%%%%%%%%%%%%%%%%%
%%%%%%%%%%%%%%%%%%%%%%%%%%%%%%%%%%%%%%%
\label{sec:prelim root}

Let $\EE$ be a coherent sheaf on $\PP^{n-1}$. 
The \defi{cohomology table} of $\EE$ is a table 
with rows indexed by $\{0, \ldots, n-1\}$ and columns indexed by $\ZZ$, 
such that the entry in row $i$ and column $j$ is 
$\dim_\Bbbk \Hs^i(\PP^{n-1}, \EE(j-i))$. 
A sequence 
$f=(f_1, \dots, f_{n-1})\in \left(\ZZ\cup\{-\infty\}\right)^{n-1}$ 
is called a \defi{root sequence} for $\PP^{n-1}$
if $f_i<f_{i-1}$ for all $i$ (with the convention that $-\infty<-\infty$).  
The \defi{length} of $f$, denoted $\ell(f)$, is the largest integer $t$
such that $f_t$ is finite.
\begin{definition}\label{def:supernatural}
Let $f$ be a root sequence for $\PP^{n-1}$. A sheaf $\EE$ on $\PP^{n-1}$ is
\defi{supernatural of type} $f=(f_1, \dots, f_{n-1})$ if the following are satisfied: 
\begin{asparaenum}
\item The dimension of $\Supp \EE$ is $\ell(f)$.
\item For all $j\in \mathbb Z$, there exists at most one $i$ 
		such that $\dim_\Bbbk \Hs^i(\PP^{n-1}, \EE(j))\ne 0$.
\item The Hilbert polynomial of $\EE$ has roots $f_1, \dots, f_{\ell(f)}$.
\end{asparaenum}
Dropping the reference to its root sequence, 
we also say that $\EE$ is a \defi{supernatural sheaf} 
(or a \defi{supernatural vector bundle} if it is locally free). 
\end{definition}

For every root sequence $f$, there exists a supernatural sheaf of type
$f$~\cite{EiScConjOfBS07}*{Theorem~0.4}. 
Moreover, the cohomology table of any coherent sheaf 
can be written as a positive real combination of cohomology tables 
of supernatural sheaves~\cite{EiScSupNat09}*{Theorem~0.1}.
The \defi{cone of cohomology tables} for $\PP^{n-1}$ 
is the convex cone inside $\prod_{j \in \ZZ}\RR^n$ 
generated by cohomology tables of coherent sheaves on $\PP^{n-1}$.  
Each root sequence $f$ corresponds to a unique extremal ray of this cone, 
which we denote by $\rho_f$, and  
every extremal ray is of the form $\rho_f$ for some root sequence $f$.

\begin{definition}\label{defn:partial:root}
For two root sequences $f$ and $f'$, we say that $f\preceq f'$ and that
$\rho_f \preceq \rho_{f'}$ if $f_i\leq f_i'$ for all $i$.
\end{definition}

This partial order induces a simplicial fan structure on the cone of cohomology
tables, where simplices correspond to chains of root sequences under the partial
order $\preceq$.
We now
show that the existence of a nonzero homomorphism between two
supernatural sheaves implies the comparability of their corresponding
root sequences, which provides the reverse implications for
Theorems~\ref{thm:poset:root:main} and~\ref{thm:equivariant:root}.

\begin{prop}\label{prop:half:poset:root}
Let $\EE$ and $\EE'$ be supernatural sheaves of types $f$ and $f'$ 
respectively.  If $\Hom(\EE',\EE)\ne 0$, then $f\preceq f'$. 
\end{prop}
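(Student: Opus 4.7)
The plan is to prove the contrapositive: assuming $f \not\preceq f'$, I would show that every morphism $\phi \colon \EE' \to \EE$ must vanish. The argument splits into two parts, in parallel with the proof of Proposition~\ref{prop:half:poset:deg}. First, the support-dimension piece: if $\ell(f) > \ell(f')$, then $\im(\phi)$, as a quotient of $\EE'$, has support of dimension at most $\ell(f')$; at the same time, it embeds into $\EE$, which is pure of dimension $\ell(f)$. (Supernaturality forces purity: an embedded lower-dimensional component of $\EE$ would, at a sufficiently negative twist, create a nonzero $\Hs^{\ell(f)}$-entry while the embedded part contributes a nonzero $\Hs^{0}$-entry, contradicting Definition~\ref{def:supernatural}.) Hence $\im(\phi)=0$, so $\phi=0$, and the argument reduces to $\ell(f)\le\ell(f')$, in which case $f_i=-\infty\le f_i'$ automatically for $i>\ell(f)$.

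For the remaining finite-root comparisons, I would first treat $f_1 > f_1'$ via a Castelnuovo--Mumford regularity argument. A direct inspection of the cohomology table — combined with the strictly-decreasing-integer property, which gives $f'_k + k \le f_1'+1$ for every $k$ — yields $\reg(\EE') = f_1'+1$. Hence $\bigoplus_{j\ge f_1'+1}\Hs^0(\EE'(j))$ is generated over $S$ in its minimal degree $f_1'+1$, so the induced map of saturated graded modules is determined by its component $\Hs^0(\EE'(f_1'+1)) \to \Hs^0(\EE(f_1'+1))$. But $f_1'+1 \le f_1$ forces $\Hs^0(\EE(f_1'+1))=0$, so this component vanishes; saturating then gives $\phi=0$. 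For the opposite extreme $f_{n-1} > f_{n-1}'$ with $\EE, \EE'$ locally free, I would apply Serre duality: $\EE^\vee$ is supernatural of type $(-n-f_{n-1},\dots,-n-f_1)$, and $\Hom(\EE',\EE) \cong \Hom(\EE^\vee,(\EE')^\vee)$, so the first-coordinate argument applied to the duals yields $f_{n-1} \le f_{n-1}'$.

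The hardest step — and the main obstacle — is treating the middle coordinates $1<i<\ell(f)$, since neither the $\Hs^0$-regularity argument nor its Serre dual reaches them. The cleanest completion I envisage passes through the BGG / Tate resolution over the exterior algebra $E = \bigwedge V^*$: a supernatural sheaf of type $f$ produces a doubly-infinite free $E$-resolution whose terms have a single pure generating degree at each homological position, with the step pattern governed by $f$. A nonzero $\phi$ induces a nonzero morphism of these Tate resolutions, and an exterior-algebra analog of Proposition~\ref{prop:half:poset:deg} — dualizing via $\Hom_E(-,\omega_E)$ to show that the comparison map must vanish at the first index where $f'_i < f_i$, and then propagating the vanishing through a homotopy-and-dualize argument — would force $f_i \le f_i'$ at each intermediate $i$.
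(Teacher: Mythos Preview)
Your final paragraph is essentially the paper's entire proof, and it handles \emph{all} coordinates at once --- the case analysis that precedes it is unnecessary. The paper argues directly with Tate resolutions: a nonzero $\phi\colon \EE'\to\EE$ gives a non-null-homotopic map $\tate(\EE')\to\tate(\EE)$, and the key observation is that such a map must be nonzero in \emph{every} cohomological degree $j$. (If $\phi^j=0$, truncation kills all $\phi^k$ for $k<j$; applying $\Hom_\Lambda(-,\Lambda)$ --- exact since $\Lambda$ is self-injective --- and truncating again kills all $\phi^k$ for $k>j$.) Since each $\tate(\EE)^j$ and $\tate(\EE')^j$ is generated in a single degree for supernatural sheaves, and $\Lambda$ is generated in degree $-1$, the nonvanishing of $\phi^j$ forces the degree inequality at position $j$ for every $j$, which is exactly $f\preceq f'$. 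No separate treatment of $f_1$, $f_{n-1}$, middle roots, or support dimensions is needed.

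Two specific issues with your detour: your Serre-duality step for $f_{n-1}$ assumes $\EE,\EE'$ are locally free, but supernatural sheaves with $\ell(f)=n-1$ need not be vector bundles, so this case is not actually closed; and your purity claim, while plausible, would require more care for embedded components of intermediate dimension. Both problems evaporate once you run the Tate argument uniformly. The moral is that the machinery you reserved for the ``hardest step'' is in fact the easiest and most uniform route --- promote it to the whole proof.
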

\begin{proof}
  Let $\tate(\EE)$ and $\tate(\EE')$ denote the Tate resolutions of
  $\EE$ and $\EE'$~\cite{EiFlScExterior03}*{\S4}.  These are doubly
  infinite acyclic complexes over the exterior algebra $\Lambda$,
  which is Koszul dual to $S$ and has generators in degree $-1$.
  Since $\Hom(\EE',\EE)\ne 0$, there is a map $\phi\colon \tate(\EE')
  \to \tate(\EE)$ that is not null-homotopic.  Observe that for every
  cohomological degree $j$, $\phi^j \colon \tate(\EE')^j \to
  \tate(\EE)^j$ is nonzero.  First, if $\phi^j=0$ for some $j$, then,
  we may take $\phi^k=0$ for all $k < j$. Secondly, if $k > j$, then
  after applying $\Hom_{\Lambda}(-, \Lambda)$ (which is exact because
  $\Lambda$ is self-injective), we can take $\phi^k$ to be zero.

By~\cite{EiScConjOfBS07}*{Theorem~6.4}, we see that all the minimal
generators of $T(\EE)^j$ (respectively, $T(\EE')^j$) are of a single degree
$i$ (respectively, $i'$). (This is equivalent to stating that every
column of the cohomology table of $\EE$ and $\EE'$ contains precisely one
nonzero entry.) Since $\phi^j$ is nonzero and $\Lambda$ is generated in
elements of degree $-1$, we see that $i' \leq i$. Now,  again
by~\cite{EiScConjOfBS07}*{Theorem~6.4},  $f \preceq f'$.
\end{proof}

%%%%%%%%%%%%%%%%%%%%%%%%%%%%%%%%%%%%%%%
\section{Construction of morphisms between supernatural sheaves}%%%%%
%%%%%%%%%%%%%%%%%%%%%%%%%%%%%%%%%%%%%%%
\label{sec:construct root}

The goal of this section is to prove Theorem~\ref{thm:main:root}, which provides
the forward direction of Theorem~\ref{thm:poset:root:main}.

\begin{thm}
\label{thm:main:root}
Let $f\preceq f'$ be two root sequences. 
Then there exist supernatural sheaves 
$\EE$ and $\EE'$ of types $f$ and $f'$, respectively, 
with $\Hom(\EE',\EE)\ne 0$.
\end{thm}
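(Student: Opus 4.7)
The plan is to adapt the Eisenbud--Schreyer pushforward construction of supernatural sheaves so that $\EE$ and $\EE'$ arise as $\pi_*\cL$ and $\pi_*\cL'$ for line bundles $\cL, \cL'$ on a common ambient product of projective spaces $X$ equipped with a projection $\pi\colon X \to \PP^{n-1}$. This mirrors the philosophy of Construction~\ref{modif:es}, but is considerably less delicate here: instead of pushing forward a Koszul-type complex and controlling its resolution, we need only push forward line bundles and exhibit one map between them.

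Concretely, I would choose $X$ to be $\PP^{n-1}\times Y$ for a sufficiently large product of projective spaces $Y$ (for instance, a product of copies of $\PP^1$), large enough to accommodate both root sequences simultaneously. I would then choose multi-twists defining $\cL$ and $\cL'$ on $X$ so that (i) the twists along the $Y$-factors force all but one higher direct image $R^i\pi_*$ to vanish in every relevant degree, and the resulting pushforwards $\EE := \pi_*\cL$ and $\EE' := \pi_*\cL'$ are supernatural of types $f$ and $f'$, respectively, and (ii) the quotient $\cL\otimes(\cL')^{-1}$ is an effective line bundle on $X$, that is, of the form $\cO_X(k, e)$ with $k\geq 0$ and $e$ an effective multidegree on $Y$. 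The hypothesis $f\preceq f'$ translates directly into the nonnegativity in (ii), since the twist of $\cL$ in the $\PP^{n-1}$ direction is determined by $f$ and similarly for $\cL'$ by $f'$, and likewise for the twists along the $Y$ factors.

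Once such a setup is in place, I would pick any nonzero global section $s \in \Hs^0\bigl(X, \cL\otimes(\cL')^{-1}\bigr)$; multiplication by $s$ yields a nonzero morphism of line bundles $\cL' \to \cL$. Applying $\pi_*$ produces a morphism $\EE' \to \EE$ on $\PP^{n-1}$, giving the candidate element of $\Hom(\EE',\EE)$. To verify that this morphism is nonzero, I would pass to global sections in some sufficiently positive twist $j$: via the projection formula, $\Hs^0(\PP^{n-1}, \EE(j))$ is identified with $\Hs^0(X, \cL\otimes\pi^*\cO(j))$, and under this identification the induced map on sections is just multiplication by the fixed nonzero $s$, which is injective. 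Hence the map $\EE'\to\EE$ cannot vanish.

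The main obstacle is the bookkeeping in step (i) and (ii): one must choose the multi-twists of $\cL$ and $\cL'$ on the common $X$ so that both pushforwards simultaneously satisfy the Eisenbud--Schreyer vanishing conditions giving supernatural sheaves of the prescribed types, while also forcing $\cL\otimes(\cL')^{-1}$ to be effective. This double constraint is exactly what forces the introduction of an auxiliary $Y$ with enough factors, in the same spirit as the choice of $r$ in~\eqref{equation:rForKos}. Once the twists are written down and $f\preceq f'$ is invoked, both constraints are met, which is why the statement follows "nearly directly" from the Eisenbud--Schreyer construction, in contrast with the genuinely subtle degree-sequence analogue handled in the previous sections.
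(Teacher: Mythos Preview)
Your outline conflates the two Eisenbud--Schreyer constructions. Construction~\ref{modif:es} for pure resolutions does use a projection $\PP^{n-1}\times Y\to\PP^{n-1}$, but the supernatural-sheaf construction does not: if $\pi\colon\PP^{n-1}\times Y\to\PP^{n-1}$ is the first projection and $\cL=\cO_X(a,b)$, then by K\"unneth $R^i\pi_*\cL\cong\cO_{\PP^{n-1}}(a)\otimes_\Bbbk\Hs^i(Y,\cO_Y(b))$, which is a direct sum of copies of $\cO(a)$. Such a sheaf is supernatural, but only of the single type $(-a-1,-a-2,\dots,-a-n+1)$; no choice of $Y$ or of twists along $Y$ will change this. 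So step~(i) of your plan cannot produce $\EE,\EE'$ of arbitrary prescribed types $f,f'$.

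The paper instead uses a \emph{finite} morphism $\pi\colon\PP^{1\times s}\to\PP^s$ (given by the multilinear forms $g_p$ of~\eqref{eqn:fp}), followed by a linear embedding $\beta\colon\PP^s\hookrightarrow\PP^{n-1}$ when $s<n-1$; then $\EE_f:=\pi_*\cO_{\PP^{1\times s}}(-f-\mathbf 1)$ is supernatural of type $\tau(f)$ on $\PP^s$. With this correction, the remainder of your strategy is exactly right and matches the paper: since $f\preceq f'$, the line bundle $\cO(f'-f)$ on $\PP^{1\times s}$ has a nonzero section, giving a nonzero map $\cL'\to\cL$, and one uses the natural inclusion $\pi_*\sheafHom(\cL',\cL)\subseteq\sheafHom(\pi_*\cL',\pi_*\cL)$ (finiteness of $\pi$ makes this immediate on affine opens). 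You also need a separate argument when $\ell(f)<\ell(f')$, because then the source products $\PP^{1\times\ell(f)}$ and $\PP^{1\times\ell(f')}$ differ; the paper handles this by a commuting square of closed immersions and finite maps (Proposition~\ref{propn:rootDiffLength}), and your sketch does not address it.
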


For the purposes of exposition, we separate the proof of
Theorem~\ref{thm:main:root} into two cases (with $\ell(f) = \ell(f')$
and with $\ell(f) < \ell(f')$), and handle these cases in
Propositions~\ref{propn:rootSameLength} and~\ref{propn:rootDiffLength}
respectively. Examples~\ref{ex:1} and ~\ref{ex:root:diff:lengths}
illustrate the essential ideas behind the proof in each case.

If $\ell(f) < n-1$, then we call $(f_1, \ldots, f_{\ell(f)})$ 
the \defi{truncation} of $f$, and write $\tau(f)$.  Let $f=(f_1, \dots,
f_{n-1})$ be a root sequence with $\ell(f)=s$.  Denote the $s$-fold product
of $\PP^1$ by $\PP^{1\times s}$.  Fix homogeneous
coordinates 
\begin{equation}
\label{equation:homogCoordProdPP}
\left([y_0^{(1)}:y_1^{(1)}], \dots, [y_0^{(s)}:y_1^{(s)}]\right)
\quad\text{on}\quad \PP^{1\times s}.
\end{equation}

In order to produce a supernatural sheaf of type $f$ on $\PP^{n-1}$, 
we first construct a supernatural vector bundle of type $\tau(f)$ on $\PP^s$. 
Its image under an embedding of $\PP^s$ as a linear subvariety $\PP^{n-1}$
will give the desired supernatural sheaf.

We now outline our approach to construct a nonzero map between supernatural sheaves on $\PP^s$ of types $f\preceq f'$ in the case that $\ell(f) = \ell(f') = s$. 
This uses the proof of~\cite{EiScConjOfBS07}*{Theorem~6.1}. 
\begin{enumerate}
\item\label{enum:snvbConstEqLenFinMap} 
	Construct a finite map $\pi\colon  \PP^{1\times s}\to \PP^{s}$.
\item\label{enum:snvbConstEqLenPushFwd} 
	Choose appropriate line bundles $\mathcal L$ and $\mathcal L'$ on $\PP^{1\times s}$ 
	so that $\pi_* \mathcal L$ and $\pi_*\mathcal L'$ are supernatural vector bundles
	of the desired types.
\item\label{enum:snvbConstEqLenMaps} 
	When $\ell(f) = \ell(f') = s$, construct a morphism $\mathcal L' \stackrel{\phi}{\longrightarrow}
    \mathcal L$ such that $\pi_* \phi$ is nonzero.
\end{enumerate}
For \eqref{enum:snvbConstEqLenFinMap}, 
we use the multilinear $(1,\dots,1)$-forms 
\begin{equation}\label{eqn:fp}
g_p:= \sum_{i_1+\dots+i_{s}=p} \left( \prod_{j=1}^{s} y_{i_j}^{(j)}
\right) \qquad \text{for}\quad p=0, \dots, s
\end{equation}
on $\PP^{1\times s}$ to define the map 
$\pi\colon \PP^{1\times s}\to \PP^s$ via $[g_0:\cdots:g_s]$.
For \eqref{enum:snvbConstEqLenPushFwd}, 
with $\mathbf 1 :=(1,\dots, 1)\in\ZZ^s$,
\[
\EE_f:=\pi_*\left(\cO_{\PP^{1\times s}}(-f-\mathbf 1)\right) 
\]
is a supernatural vector bundle of type $\tau(f)$ on $\PP^s$ of rank
$s!$ (the degree of $\pi$). The next example illustrates
\eqref{enum:snvbConstEqLenMaps}.

\begin{example}
\label {ex:1}
Here we find a nonzero morphism $\EE_{f'}\to \EE_{f}$ that 
is the direct image of a morphism of line bundles on $\PP^{1\times (n-1)}$. 
Let $n=5$ and $f:=(-2,-3,-4,-5) \preceq f':=(-1,-2,-3,-4)$. 
The map $\pi\colon \PP^{1\times 4}\to \PP^4$ is finite of degree $4!=24$. 
Following steps \eqref{enum:snvbConstEqLenFinMap} and \eqref{enum:snvbConstEqLenPushFwd} as outlined above, we set 
$\EE:=\EE_{f}=\pi_* \cO_{\PP^{1\times 4}}(1,2,3,4)$ and 
$\EE':=\EE_{f'}=\pi_* \cO_{\PP^{1\times 4}}(0,1,2,3)$. 
There is a natural inclusion
\begin{equation}
\label{equation:inclGlobalSec}
\pi_* \sheafHom_{\PP^{1\times 4}}\left( \cO_{\PP^{1\times 4}}(0,1,2,3),
\cO_{\PP^{1\times 4}}(1,2,3,4)\right)\subseteq
\sheafHom_{\PP^4}\left( \EE', \EE \right),
\end{equation}
which induces an inclusion of global sections 
(see Remark~\ref{rmk:inclusion}). Therefore
\begin{align*}
  \Hom(\EE', \EE)&\supseteq \Hs^0\left(\PP^4, \pi_* \sheafHom_{\PP^{1\times 4}}\left( \cO_{\PP^{1\times 4}}(0,1,2,3), \cO_{\PP^{1\times 4}}(1,2,3,4)\right) \right)\\
  &= \Hs^0(\PP^{1\times 4}, \cO_{\PP^{1\times 4}}(1,1,1,1))\\
  &\simeq\Bbbk^{16}.
\end{align*}
We thus conclude that $\Hom(\EE', \EE)\ne 0$.

The inclusion \eqref{equation:inclGlobalSec} is strict. 
Note that, by definition, neither $\EE'$ nor $\EE$ has intermediate cohomology, 
and hence, by Horrocks' Splitting Criterion, both $\EE$ and $\EE'$ must split as the sum of line bundles. 
Thus $\EE'=\cO_{\PP^4}^{24}$ and $\EE=\cO_{\PP^4}(1)^{24}$, and it follows 
that $\Hom(\EE',\EE)=\Hs^0(\PP^4,\cO(1)^{576}) \simeq \Bbbk^{2880}$.
\end{example}

\begin{remark}\label{rmk:inclusion}
Let $\pi\colon\PP^{1\times s} \to \PP^s$ be as in \eqref{enum:snvbConstEqLenFinMap}. 
For coherent sheaves $\FF$ and $\GG$ on $\PP^{1\times s}$, we have 
\[
\pi_* \sheafHom_{\cO_{\PP^{1\times s}}}(\FF,\GG)
\subseteq
\sheafHom_{\cO_{\PP^s}}(\pi_*\FF,\pi_*\GG).
\]
Indeed, this can be checked locally. 
Let $U \subseteq \PP^s$ be an affine open subset, 
and write $A = \Hs^0(U, \cO_{\PP^s})$ and 
$B = \Hs^0(U, \pi_*\cO_{\PP^{1\times s}})$. 
For all $B$-modules $M$ and $N$, every nonzero $B$-module 
homomorphism is also a nonzero $A$-module homomorphism 
via the map $A \rightarrow B$.  Injectivity is immediate.
\end{remark}

\begin{remark}
\label{remark:pushFwdSupNat}
Suppose that $\beta \colon \PP^s \to \PP^{n-1}$ is a closed immersion 
as a linear subvariety. 
Let $\EE$ be a coherent sheaf on $\PP^s$.
It follows from the projection formula and from the finiteness of $\beta$ that
$\EE$ is a supernatural sheaf
on $\PP^s$ of type $(f_1, \ldots, f_s)$ if and only if $\beta_*\EE$ is a
supernatural sheaf on $\PP^{n-1}$ of type 
$(f_1, \ldots, f_s, -\infty, \ldots, -\infty)$.
\end{remark}

\begin{prop}
\label{propn:rootSameLength}
If $\ell(f) = \ell(f')$, then Theorem~\ref{thm:main:root} holds.
\end{prop}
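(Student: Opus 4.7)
The plan is to generalize Example \ref{ex:1} directly. Write $s := \ell(f) = \ell(f')$ and carry out the Eisenbud--Schreyer style construction on $\PP^{1 \times s}$: let $\pi \colon \PP^{1\times s} \to \PP^s$ be the finite map given by the multilinear forms $g_0, \dots, g_s$ of \eqref{eqn:fp}, and fix any linear embedding $\beta \colon \PP^s \hookrightarrow \PP^{n-1}$. Define
\[
\EE := \beta_* \EE_f = \beta_* \pi_* \cO_{\PP^{1\times s}}(-f - \mathbf 1), \qquad \EE' := \beta_* \EE_{f'} = \beta_* \pi_* \cO_{\PP^{1\times s}}(-f' - \mathbf 1).
\]
By the construction outlined prior to Example \ref{ex:1}, the sheaves $\EE_f$ and $\EE_{f'}$ are supernatural vector bundles on $\PP^s$ of types $\tau(f)$ and $\tau(f')$. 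Applying Remark \ref{remark:pushFwdSupNat} to the linear embedding $\beta$ then promotes them to supernatural sheaves on $\PP^{n-1}$ of types $f$ and $f'$ respectively.

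Next I would construct the morphism. Because $f \preceq f'$, the difference $f' - f$ lies in $\NN^s$, so $\cO_{\PP^{1\times s}}(f' - f)$ admits a nonzero global section $h$. Multiplication by $h$ gives a nonzero morphism of line bundles
\[
h \colon \cO_{\PP^{1\times s}}(-f' - \mathbf 1) \longrightarrow \cO_{\PP^{1\times s}}(-f - \mathbf 1),
\]
and I take the candidate map $\EE' \to \EE$ to be $\beta_* \pi_* h$.

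The main step to check, and the only real obstacle, is that this pushed-forward map is nonzero. For the pushforward along $\pi$, this is precisely the content of Remark \ref{rmk:inclusion}: applying global sections to the inclusion $\pi_* \sheafHom(\cO(-f'-\mathbf 1),\cO(-f-\mathbf 1)) \subseteq \sheafHom(\EE_{f'},\EE_f)$ yields an injection
\[
\Hs^0(\PP^{1\times s}, \cO(f'-f)) \hookrightarrow \Hom_{\PP^s}(\EE_{f'}, \EE_f),
\]
under which $h \mapsto \pi_* h$; so $\pi_* h \neq 0$. For the further pushforward along $\beta$, since $\beta$ is a closed immersion, $\beta_*$ is fully faithful and
\[
\Hom_{\PP^s}(\EE_{f'}, \EE_f) \;\simeq\; \Hom_{\PP^{n-1}}(\beta_* \EE_{f'}, \beta_* \EE_f) \;=\; \Hom_{\PP^{n-1}}(\EE', \EE),
\]
so $\beta_* \pi_* h$ is still nonzero, which completes the construction.
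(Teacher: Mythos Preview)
Your proof is correct and follows essentially the same approach as the paper. The only cosmetic difference is that the paper first reduces to the case $\ell(f')=n-1$ and then constructs the morphism there, whereas you carry out the construction on $\PP^s$ and push forward along $\beta$ at the end; the content (Remark~\ref{rmk:inclusion} plus the faithfulness of $\beta_*$, together with $\Hs^0(\PP^{1\times s},\cO(f'-f))\neq 0$) is identical.
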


\begin{proof}
We first reduce to the case $\ell(f')=n-1$. 
Let $\beta\colon\PP^{\ell(f')} \to \PP^{n-1}$ be a closed immersion as a
linear subvariety. 
Let $\ell(f')=s$ and write $f=(f_1, \dots, f_s, -\infty, \dots, -\infty)$ and
$f'=(f_1', \dots, f_s', -\infty, \dots, -\infty)$. 
Assume that $\EE$ and $\EE'$ are supernatural sheaves 
of type $(f_1, \dots, f_s)$ and $(f'_1, \dots, f_s')$ on $\PP^{s}$ and that 
$\Hom(\EE', \EE)\ne 0$.  Then, by Remark~\ref{remark:pushFwdSupNat}, 
$\beta_*\EE$ and $\beta_*\EE'$ are supernatural sheaves of types $f$ and $f'$, 
and $\Hom(\beta_*\EE', \beta_*\EE) \neq 0$.

We may thus assume that $\ell(f')=n-1$.  
Let $\mathbf{1} := (1,\dots,1) \in\ZZ^{n-1}$.  
Let $\pi\colon \PP^{1 \times (n-1)} \to \PP^{n-1}$ be the morphism given by 
the forms $g_p$ defined in~\eqref{eqn:fp} (with $s=n-1$). 
Let $\EE := \EE_f=  \pi_* \cO(-f-\mathbf 1)$ and 
$\EE' := \EE_{f'}= \pi_*\cO(-f'-\mathbf{1})$. 
Remark~\ref{rmk:inclusion} shows that  
\[
\Hs^0\left(\PP^{n-1}, \pi_* \sheafHom_{\PP^{1\times (n-1)}}\left(
\cO(-f'-\mathbf{1}), \cO(-f-\mathbf{1}) \right) \right) \subseteq
\Hom_{\PP^{n-1}}(\EE',\EE).
\]
Note that
$\sheafHom_{\PP^{1\times (n-1)}}\left(
\cO(-f'-\mathbf{1}), \cO(-f-\mathbf{1}) 
\right) = \cO( f' - f )$. 
Since $f \preceq f'$, we have that 
$\Hs^0(\PP^{1\times (n-1)}, \cO( f'- f ))\ne 0$, 
and thus $\Hom_{\PP^{n-1}}(\EE',\EE) \neq 0$.
\end{proof}

When $\ell(f) < \ell(f')$, the supernatural sheaves 
constructed using~\eqref{enum:snvbConstEqLenFinMap} 
and~\eqref{enum:snvbConstEqLenPushFwd} above have supports 
of different dimensions. 
Before addressing this general case, we provide an example. 

\begin{example}
\label{ex:root:diff:lengths}
Let $n=5$ and $f=(-2,-3,-4,-\infty) \preceq f'=(-1,-2,-3,-4)$, 
so that $\ell(f)=3 < \ell(f') = 4 = n-1$. 
We proceed by modifying steps 
\eqref{enum:snvbConstEqLenFinMap}-\eqref{enum:snvbConstEqLenMaps} 
above. 
{
\makeatletter
\def\theenumi{\@roman\c@enumi$'$}
\makeatother
\begin{enumerate}
\item\label{enum:snvbConstNonEqLenFinMap}
We extend the construction of \eqref{enum:snvbConstEqLenFinMap} 
to the commutative diagram
\[
\xymatrix{
\PP^{1\times 3} \ar[r]^{\alpha} \ar[d]^{\pi^{(3)}} & \PP^{1 \times 4} \ar[d]^{\pi^{(4)}}\\
\PP^3 \ar[r]^{\beta}&\PP^{4}.
}
\]
\item\label{enum:snvbConstNonEqLenPushFwd} 
Choose appropriate line bundles $\mathcal L$ on $\PP^{1\times 3}$ 
and $\mathcal L'$ on $\PP^{1\times 4}$, 
so that $\pi^{(3)}_* \mathcal L$ and $\pi^{(4)}_* \mathcal L'$ 
are supernatural sheaves of the desired types.
\item\label{enum:snvbConstNonEqLenMaps}
	Construct a morphism 
	$\mathcal L' \stackrel{\phi}{\longrightarrow}\alpha_*\mathcal L$ 
	such that $\pi^{(4)}_* \phi$ is nonzero. 
\end{enumerate}
}

For \eqref{enum:snvbConstNonEqLenFinMap}, we use the 
homogeneous coordinates from \eqref{equation:homogCoordProdPP}.
The maps $\pi^{(3)}$ and $\pi^{(4)}$ are instances of the map $\pi$ 
from \eqref{enum:snvbConstEqLenFinMap} for $\PP^{1 \times 3}$ 
and $\PP^{1 \times 4}$, respectively.
Define a closed immersion 
$\alpha\colon\PP^{1\times 3}\rightarrow \PP^{1\times 4}$ 
by the vanishing of the coordinate $y_1^{(4)}$. 
Fix coordinates $x_0, \ldots, x_4$ for $\PP^4$, and let 
$\beta\colon \PP^3 \rightarrow \PP^4$ be the closed immersion given 
by the vanishing of $x_4$. 
We now have that the diagram in 
\eqref{enum:snvbConstNonEqLenFinMap} is indeed commutative.

In \eqref{enum:snvbConstNonEqLenPushFwd}, 
we take $\mathcal L =\cO_{\mathbb P^{1\times 3}}(1,2,3)$ and 
$\mathcal L' = \cO_{\PP^{1\times 4}}(0,1,2,3)$ and set 
$\EE_f = \pi^{(3)}_* \cL$ and $\EE_{f'} = \pi^{(4)}_* \cL'$. 
Set $\EE:=\beta_* \EE_f$ and $\EE':=\EE_{f'}$. 
Then $\EE$ is a supernatural sheaf on $\PP^4$ 
(see Remark~\ref{remark:pushFwdSupNat}), and 
\begin{align*}
\Hom_{\PP^{4}}(\EE', \EE) 
%& = \Hom_{\PP^{4}}(\EE', \beta_* \EE_{f}) & \\
 &= \Hs^0\left(\PP^{4}, \sheafHom\left( \pi^{(4)}_*  \left( \cO_{\PP^{1\times
 4}}(0,1,2,3)\right), \pi^{(4)}_*\left( \alpha_*\cO_{\PP^{1\times 3}}(1,2,3)
 \right) \right) \right).&
\intertext{By Remarks~\ref{rmk:inclusion} and~\ref{rmk:0s}, we obtain the containment}
\Hom_{\PP^{4}}(\EE', \EE)&\supseteq \Hs^0\left(\PP^{4}, \pi^{(4)}_* \sheafHom\left( \cO_{\PP^{1\times 4}}(0,1,2,3), \alpha_*\cO_{\mathbb P^{1\times 3}}(1,2,3) \right)\right) \\
&\cong \Hs^0\left(\PP^{1\times 4},  \sheafHom\left(  \cO_{\mathbb
      P^{1\times 4}}(0,1,2,3),\alpha_*\cO_{\mathbb P^{1\times
        3}}(1,2,3) \right) \right)\\
&\cong \Hs^0\left(\PP^{1\times 4}, \left( \alpha_*\cO_{\mathbb P^{1\times 3}}(1,1,1)\right)(0,0,0,-3) \right)&\\
&\cong \Hs^0\left(\PP^{1\times 4}, \alpha_*\cO_{\mathbb P^{1\times 3}}(1,1,1)\right)\cong \Bbbk^{8}.
\end{align*}
In particular, $\Hom_{\PP^{4}}(\EE', \EE)\ne 0$, as desired.
\end{example}

\begin{remark}\label{rmk:0s}
Let $1 \leq s < t$, and let $\alpha\colon\PP^{1\times s} \rightarrow \PP^{1
\times t}$ be the embedding given by the vanishing of $y^{(s+1)}_1, \ldots,
y^{(t)}_1$. 
Let $\mathcal F$ be a coherent sheaf on $\PP^{1 \times s}$ and 
$b\in \ZZ^{t-s}$. Write $\mathbf{0}_s$ for the
$0$-vector in $\ZZ^s$. Then
\begin{equation}%\label{eqn:fact}
\Hs^i\left( \PP^{1\times t}, \left( \alpha_* \FF\right) (\mathbf{0}_s,b)
\right) 
\cong 
\Hs^i\left( \PP^{1\times t}, \alpha_* \FF\right)
\cong 
\Hs^i\left( \PP^{1\times s}, \FF\right)
\end{equation}
The first isomorphism
follows from the projection formula, taken along with 
the fact that, by the definition of $\alpha$, the line bundle
$\cO_{\PP^{1\times t}}(\mathbf{0}_s,b)$ is trivial when restricted to the
support of $\alpha_* \FF$ (which is contained in $\PP^{1 \times s}$). 
The second isomorphism holds because $\alpha$ is a finite morphism.
\end{remark}

\begin{prop}
\label{propn:rootDiffLength}
If $\ell(f) < \ell(f')$, then Theorem~\ref{thm:main:root} holds.
\end{prop}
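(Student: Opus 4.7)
The plan is to generalize the construction of Example~\ref{ex:root:diff:lengths} by combining the reduction step from Proposition~\ref{propn:rootSameLength} with the pushforward along a commutative square of projections. Set $s = \ell(f)$ and $t = \ell(f')$. First, if $t < n-1$, I use Remark~\ref{remark:pushFwdSupNat} together with a closed immersion $\PP^t \hookrightarrow \PP^{n-1}$ as a linear subvariety to reduce to constructing the supernatural sheaves and a nonzero morphism on $\PP^t$; thus I may assume $t = n-1$.

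Next, I build the commutative square
\[
\xymatrix{
\PP^{1\times s} \ar[r]^{\alpha} \ar[d]^{\pi^{(s)}} & \PP^{1 \times t} \ar[d]^{\pi^{(t)}}\\
\PP^s \ar[r]^{\beta} & \PP^{t},
}
\]
where $\alpha$ is the closed immersion defined by $y_1^{(s+1)} = \cdots = y_1^{(t)} = 0$, $\beta$ is the linear embedding of $\PP^s$ into the first $s+1$ coordinates of $\PP^t$, and $\pi^{(s)}, \pi^{(t)}$ are the finite maps defined by the multilinear forms~\eqref{eqn:fp}. Commutativity follows from a brief computation: since each exponent $i_j$ lies in $\{0,1\}$, pulling back $g_p^{(t)}$ along $\alpha$ yields $g_p^{(s)} \cdot \prod_{j=s+1}^t y_0^{(j)}$ for $p \leq s$ and $0$ for $p > s$. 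Following the recipe in Example~\ref{ex:root:diff:lengths}, I then set $\cL := \cO_{\PP^{1\times s}}(-\tau(f) - \mathbf{1}_s)$ and $\cL' := \cO_{\PP^{1\times t}}(-f' - \mathbf{1}_t)$, and define
\[
\EE := \beta_*\bigl(\pi^{(s)}_* \cL\bigr) \qquad \text{and} \qquad \EE' := \pi^{(t)}_* \cL'.
\]
By the Eisenbud--Schreyer construction together with Remark~\ref{remark:pushFwdSupNat}, $\EE$ and $\EE'$ are supernatural of types $f$ and $f'$, respectively.

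To exhibit a nonzero element of $\Hom(\EE', \EE)$, I use the commutativity of the square to rewrite $\EE = \pi^{(t)}_*(\alpha_* \cL)$ and invoke Remark~\ref{rmk:inclusion} to obtain
\[
\Hom_{\PP^t}(\EE', \EE) \;\supseteq\; \Hs^0\bigl(\PP^{1\times t},\, \sheafHom(\cL', \alpha_* \cL)\bigr).
\]
Rewriting $\sheafHom(\cL', \alpha_*\cL) = (\alpha_*\cL) \otimes (\cL')^{-1}$ and applying the projection formula to push the twist $(\cL')^{-1}$ through $\alpha_*$, the line bundle assumes the form $\alpha_*\bigl(\cO_{\PP^{1\times s}}(f'_1 - f_1, \ldots, f'_s - f_s)\bigr) \otimes \cO_{\PP^{1\times t}}(\mathbf{0}_s, b)$ for some $b \in \ZZ^{t-s}$. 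Remark~\ref{rmk:0s} then identifies its global sections with $\Hs^0\bigl(\PP^{1\times s},\, \cO_{\PP^{1\times s}}(f'_1 - f_1, \ldots, f'_s - f_s)\bigr)$, which is nonzero because $f \preceq f'$ forces $f'_i - f_i \geq 0$ for $i \leq s$.

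The main technical challenge is the twist bookkeeping in the final display: one must verify that the auxiliary twist $\cO_{\PP^{1\times t}}(\mathbf{0}_s, b)$ produced by the projection formula is supported only in the last $t-s$ coordinates and is therefore absorbed by Remark~\ref{rmk:0s}. This computation mirrors the one performed explicitly in Example~\ref{ex:root:diff:lengths}, so the proof essentially amounts to formalizing that example with generic values of $s$ and $t$. The commutativity of the square is the other ingredient requiring care, but it follows at once from the explicit formula for $g_p$ given above.
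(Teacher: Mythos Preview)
Your proof is correct and follows essentially the same approach as the paper's: reduce to $\ell(f')=n-1$, build the commutative square with $\alpha$ and $\beta$, define $\EE=\beta_*\pi_*\cL$ and $\EE'=\pi'_*\cL'$, and then compute $\Hom(\cL',\alpha_*\cL)$ via the projection formula and Remark~\ref{rmk:0s}. The only (harmless) difference is that you spell out the commutativity check for the square explicitly, which the paper leaves implicit; one minor wording slip is calling $\alpha_*\cO_{\PP^{1\times s}}(\cdots)\otimes\cO(\mathbf 0_s,b)$ a ``line bundle'' when it is a sheaf supported on the image of $\alpha$.
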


\begin{proof}
We may reduce to the case $\ell(f')=n-1$ by the same argument as in the beginning of the proof of Proposition~\ref{propn:rootSameLength}.

Let $s = \ell(f)$ and consider the line bundles 
$\mathcal L = \cO_{\PP^{1 \times s}}(-\tau(f) - \mathbf 1)$ 
on $\PP^{1 \times s}$ and 
$\mathcal L' = \cO_{\PP^{1 \times(n-1)}}(-f' - \mathbf 1)$ 
on $\PP^{1 \times (n-1)}$. 
Let $\pi\colon\PP^{1\times s} \to \PP^s$ and 
$\pi'\colon \PP^{1 \times (n-1)} \to \PP^{n-1}$ be the maps 
defined by the forms in~\eqref{eqn:fp}. 
Let $\EE_f = \pi_* \mathcal L$ and 
$\EE_{f'} = (\pi')_* \mathcal L'$, and define the closed immersion 
$\alpha\colon\PP^{1\times s}\rightarrow \PP^{1\times(n-1)}$ 
by the vanishing of the coordinates $y_1^{(s+1)}, \ldots,y_1^{(n-1)}$. 
Fix coordinates $x_0, \ldots, x_{n-1}$ for $\PP^{n-1}$, and
let $\beta\colon \PP^s \rightarrow \PP^{n-1}$ be the closed immersion 
given by the vanishing of $x_{s+1}, \ldots, x_{n-1}$. 
This yields the commutative diagram
\[
\xymatrix{
\PP^{1\times s} \ar[r]^-{\alpha} \ar[d]^{\pi} & 
\PP^{1 \times (n-1)} \ar[d]^{\pi'} \\
\PP^s \ar[r]^{\beta} & \PP^{n-1}.
}
\]
By Remark~\ref{remark:pushFwdSupNat}, 
$\EE := \beta_* \EE_f$ is a supernatural sheaf of type $f$. 
Also, $\EE' := \EE_{f'}$ is a supernatural sheaf of type $f'$. 

We must show that $\Hom_{\PP^{n-1}}(\EE', \EE) \neq 0$. 
It suffices to show that 
$\Hom_{\PP^{1 \times
(n-1)}}(\mathcal L', \alpha_*\mathcal L) \neq 0$ 
by Remark~\ref{rmk:inclusion}. 
To see this, let $c:=(f_1', \dots, f'_s)$ and $b:=(-f'_{s+1}-1, \dots, -f'_{s'}-1)$, 
and note that 
\begin{align*}
\sheafHom (\mathcal L', \alpha_*\mathcal L) & = 
\sheafHom(\cO_{\PP^{1\times (n-1)}}(-f'-\mathbf 1), 
\alpha_* \cO_{\PP^{1\times s}}(-\tau(f)-\mathbf 1)) \\ 
& \cong (\alpha_* \cO_{\PP^{1\times s}}(c-\tau(f)))(\mathbf 0_s, -b).
\end{align*}
By Remark~\ref{rmk:0s}, 
$\Hom (\mathcal L', \alpha_*\mathcal L) = \Hs^0(\PP^{1 \times s},
\cO(c-\tau(f)))$, which is nonzero as $\tau(f) \preceq c$.
\end{proof}

%%%%%%%%%%%%%%%%%%%%%%%%%%%%%%%%%%%%%%%%
\section{Equivariant construction of morphisms between supernatural sheaves}%
%%%%%%%%%%%%%%%%%%%%%%%%%%%%%%%%%%%%%%%%
\label{sec:equiv:root}

Throughout this section, we assume that $\Bbbk$ is a field of
characteristic 0 and that all root sequences have length $n-1$.  Let $V$ be
an $n$-dimensional $\Bbbk$-vector space, identify $\PP^{n-1}$ with
$\PP(V)$, and let $\cQ$ denote the tautological quotient bundle of
rank $n-1$ on $\PP(V)$.  We have a short exact sequence
\[
0 \to \cO(-1) \to V \otimes \cO_{\PP(V)} \to \cQ \to 0.
\]
We will use the fact that $\det \cQ \cong \cO(1) \otimes \bigwedge^n
V$ is a $\GL(V)$-equivariant isomorphism. For a weakly decreasing
sequence $\lambda$ of non-negative integers, we let $\Sc_\lambda$
denote the corresponding Schur functor. See \cite{weyman}*{Chapter 2}
for more details (since we are working in characteristic 0, the
functors $K_\lambda$ and $L_{\lambda^t}$ are isomorphic, where
$\lambda^t$ is the transpose partition of $\lambda$, and we call this
$\Sc_\lambda$). We extend this definition to weakly decreasing
sequences $\lambda$ with possibly negative entries as follows. Set
$\mathbf{1}=(1,\dots, 1)\in \mathbb Z^{n-1}$ and define
$\Sc_{\lambda}\cQ:=\Sc_{\lambda - \lambda_{n-1}\mathbf{1}}\cQ \otimes
(\det \cQ)^{\lambda_{n-1}}$.

\begin{proof}[Proof of Theorem~\ref{thm:equivariant:root}] 
  The reverse implication has been shown in
  Proposition~\ref{prop:half:poset:root}.  For the forward
  implication, we proceed in two steps.  First, we construct
  equivariant supernatural bundles $\EE'$ and $\EE$ with $\Hom(\EE',
  \EE) \ne 0$ using the construction in the proof of \cite[Theorem
  6.2]{EiScConjOfBS07}. Second, we use this fact to construct a new
  supernatural bundle $\EE''$ of type $f'$ such that
  $\Hom_{\GL(V)}(\EE'',\EE)\ne 0$.  Thus we will ignore powers of the
  trivial bundle $\bigwedge^n V$ that appear in the first step.
    
Write $N_i = f'_i - f_i$ and let $\lambda\in \mathbb Z^{n-1}$ be the 
partition defined by
\[
\lambda_i:= f_1 - f_{n-i} - n+1 + i \quad \text{for } 1 \le i \le n-1.
\]
Let $\lambda'$ be the sequence of weakly decreasing integers defined by 
$\lambda'_{n-i} := \lambda_{n-i} - N_i$ and set
\[
\EE := \Sc_\lambda \cQ \otimes \cO(-f_1 - 1)\quad \text{ and } \quad
\EE' := \Sc_{\lambda'} \cQ \otimes \cO(-f_1 - 1).
\]
Observe that $\Sc_{\lambda'} \cQ \otimes \cO(-f_1 - 1) \cong
\Sc_{\lambda' + N_1\cdot \mathbf{1}} \cQ \otimes \cO(-f'_1 - 1)$.
Hence by the Borel--Weil--Bott theorem~\cite{weyman}*{Corollary
  4.1.9}, $\EE$ and $\EE'$ are supernatural vector bundles of types
$f$ and $f'$, respectively.
  
To compute $\Hom(\EE',\EE)$, let $\lambda'' := \lambda' + N_1\cdot
\mathbf{1}$. Define $\lambda^c$ to be the complement of $\lambda$
inside of the $(n-1) \times \lambda_1$ rectangle, so $\lambda^c_j =
\lambda_1 - \lambda_{n-j}$ for $1 \le j \le n-1$.  Then
$\Sc_{\lambda}\cQ\cong \Sc_{\lambda^c}\cQ^* \otimes \cO(\lambda_1)$ by
\cite{weyman}*{Exercise 2.18}.  We then obtain
\[
\sheafHom(\EE', \EE) \cong \Sc_{\lambda'} \cQ^* \otimes \Sc_\lambda
\cQ \cong \Sc_{\lambda''} \cQ^* \otimes \Sc_{\lambda^c} \cQ^* \otimes
\cO(\lambda_1 + N_1)
\]
and seek to show that this bundle has a nonzero global section.
  
Fix $\mu$ so that $\Sc_\mu \cQ^*$ is a direct summand of
$\Sc_{\lambda''} \cQ^* \otimes \Sc_{\lambda^c} \cQ^*$.  The
Borel--Weil--Bott Theorem~\cite{weyman}*{Corollary 4.1.9} shows that
$\Sc_\mu \cQ^* \otimes \cO(\lambda_1 + N_1)$ has nonzero sections if and
only if $\lambda_1 + N_1 \ge \mu_1$.  This is equivalent to $\mu$ being
inside of a $(n-1) \times (\lambda_1+N_1)$
rectangle. By~\cite{fulton}*{\S9.4}, the existence of such a $\mu$ is
equivalent to the condition
\begin{equation}\label{eqn:lambda:ineq}
  \lambda''_i + \lambda^c_{n-i} \le  \lambda_1+N_1 \quad \text{for }
  i=1,\dots,n-1.
\end{equation} 
Since $\lambda''_i + \lambda^c_{n-i} = \lambda_1 + N_1 - N_{n-i}$, we
see that \eqref{eqn:lambda:ineq} holds for all $i$, and thus
$\Hom(\EE',\EE)\ne 0$.

For the second step, replace $\EE'$ by $\EE'' : = \EE' \otimes
\Hom(\EE', \EE)$, where we view $\Hom(\EE', \EE)$ as a trivial bundle
over $\PP(V)$.  Note that
\[
\Hs^i(\PP(V),\EE''(j))\cong \Hs^i(\PP(V),\EE'(j))\otimes \Hom(\EE',
\EE)
\]
for all $i,j$, and hence $\EE''$ is also supernatural of type $f'$.
The space of sections $\Hom(\EE'', \EE)$ is $\Hom(\EE', \EE)^* \otimes
\Hom(\EE', \EE)$, which contains the $\GL(V)$-invariant section
corresponding to the evaluation map. This gives a nonzero
$\GL(V)$-equivariant map $\EE'' \to \EE$.
\end{proof}

\begin{example}
  We reconsider Example~\ref{ex:1} in the equivariant context. Here we
  will not ignore powers of $\bigwedge^n V$.  Let $n=4$ and
  $f=(-2,-3,-4,-5) \preceq f'=(-1,-2,-3,-4)$.  With notation as in the
  proof of Theorem~\ref{thm:equivariant:root}, we have $N=(1,1,1,1)$,
  $\lambda=(0,0,0,0)$, $\lambda'=(-1,-1,-1,-1)$,
  \begin{align*}
    \EE&=\Sc_{(0,0,0,0)}\cQ \otimes \cO(2-1)=\cO(1), \quad \text{and}\\
    \EE'&=\Sc_{(-1,-1,-1,-1)}\cQ \otimes \cO(2-1) = \left(\cO(-1)
      \otimes \left(\bigwedge^n V\right)^{-1} \right) \otimes \cO(1)
      =\left(\bigwedge^n V\right)^{-1} \otimes \cO.
  \end{align*}
  Since $\lambda^c=(0,0,0,0)=\lambda''$, we see that
  \[
  \sheafHom(\EE',\EE)\cong \cO(1) \otimes \bigwedge^n V,
  \]
  which certainly has nonzero global sections.  In fact,
  $\Hom(\EE',\EE)\cong V \otimes \bigwedge^n V$.  Note, however, that
  this implies that there is no nonzero equivariant morphism from
  $\EE'$ to $\EE$. We thus set $\EE'':=\EE\otimes \Hom(\EE',\EE)$.
  Then $\Hom(\EE'',\EE)\cong V^*\otimes V$, and our desired nonzero
  equivariant morphism is given by the trace element.
\end{example}

%%%%%%%%%%%%%%%%%%%%%%%%%%%%%%%%%%%%%%%
\section{Remarks on other graded rings}%%%%%%%%%%%%%%%%%%%
%%%%%%%%%%%%%%%%%%%%%%%%%%%%%%%%%%%%%%%
\label{sec:extensions}

Given any graded ring $R$, one could try to use an analog of
Theorem~\ref{thm:poset:deg:main} to induce a partial order on
the extremal rays of the cone of Betti diagrams over $R$.  This
application has already proven useful in a couple of the other cases
where Boij--S\"oderberg has been studied.  In this section, we provide
a sketch of some of these applications.

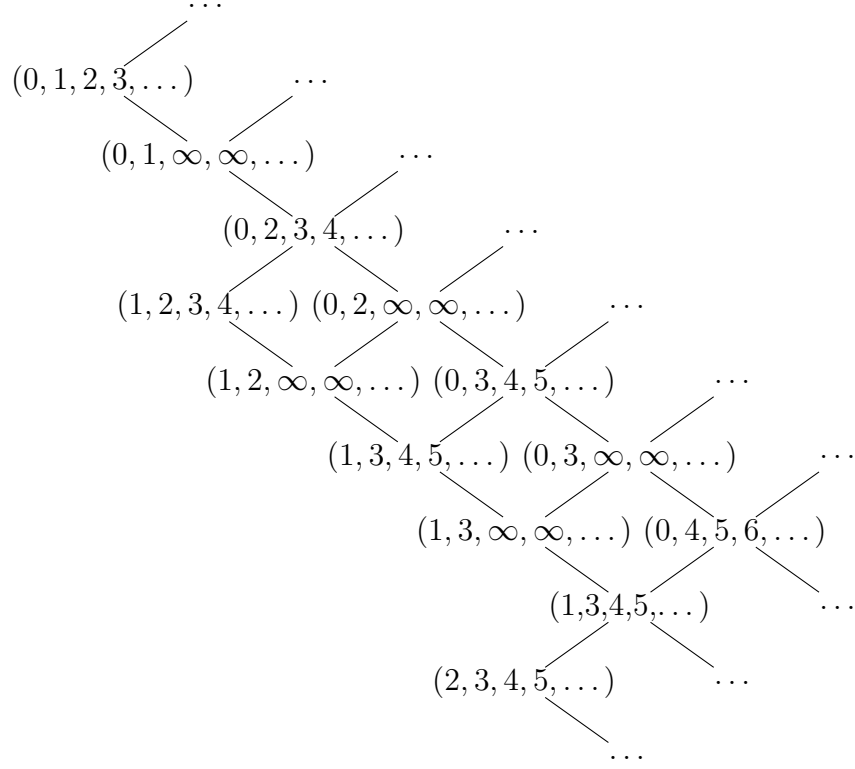
\begin{figure}
\begin{tikzpicture}[xscale=1.4,yscale=1.0]
\draw(0,9) node {$(0,1,2,3,\dots)$};
\draw(1,10) node {\dots};
\draw(1,8) node {$(0,1,\infty,\infty,\dots)$};
\draw(2,9) node {\dots};
\draw(3,8) node {\dots};
\draw(2,7) node {$(0,2,3,4,\dots)$};
\draw(3,6) node {$(0,2,\infty,\infty,\dots)$};
\draw(1,6) node {$(1,2,3,4,\dots)$};
\draw(4,7) node {\dots};
\draw(2,5) node {$(1,2,\infty,\infty,\dots)$};
\draw(3,4) node {$(1,3,4,5,\dots)$};
\draw(4,5) node {$(0,3,4,5,\dots)$};
\draw(5,6) node {\dots};
\draw(4,3) node {$(1,3,\infty,\infty,\dots)$};
\draw(5,4) node {$(0,3,\infty,\infty,\dots)$};
\draw(6,5) node {\dots};
\draw(4,1) node {$(2,3,4,5,\dots)$};
\draw(5,2) node {(1,3,4,5,\dots)};
\draw(6,3) node {$(0,4,5,6,\dots)$};
\draw(7,4) node {\dots};
\draw(5,0) node {\dots};
\draw(6,1) node {\dots};
\draw(7,2) node {\dots};
\draw[-] (1.2,6.2)--(1.8,6.8);
\draw[-] (1.2,5.8)--(1.8,5.2);
\draw[-] (0.2,9.2)--(.8,9.8);
\draw[-] (1.2,8.2)--(1.8,8.8);
\draw[-] (2.2,7.2)--(2.8,7.8);
\draw[-] (3.2,6.2)--(3.8,6.8);
\draw[-] (4.2,5.2)--(4.8,5.8);
\draw[-] (5.2,4.2)--(5.8,4.8);
\draw[-] (6.2,3.2)--(6.8,3.8);
\draw[-] (0.2,8.8)--(0.8,8.2);
\draw[-] (1.2,7.8)--(1.8,7.2);
\draw[-] (2.2,6.8)--(2.8,6.2);
\draw[-] (3.2,5.8)--(3.8,5.2);
\draw[-] (4.2,4.8)--(4.8,4.2);
\draw[-] (5.2,3.8)--(5.8,3.2);
\draw[-] (2.2,5.2)--(2.8,5.8);
\draw[-] (3.2,4.2)--(3.8,4.8);
\draw[-] (4.2,3.2)--(4.8,3.8);
\draw[-] (5.2,2.2)--(5.8,2.8);
\draw[-] (2.2,4.8)--(2.8,4.2);
\draw[-] (3.2,3.8)--(3.8,3.2);
\draw[-] (4.2,2.8)--(4.8,2.2);
\draw[-] (4.2,1.2)--(4.8,1.8);
\draw[-] (4.2,0.8)--(4.8,0.2);
\draw[-] (5.2,1.8)--(5.8,1.2);
\draw[-] (6.2,2.8)--(6.8,2.2);
\end{tikzpicture}
\caption{For 
  the hypersurface ring $R$, this partial order provides a simplicial fan structure, 
  as illustrated in~\cite{bbeg} and discussed in Example~\ref{ex:hypersurface}.  
  The partial order is determined by an analog of 
  Theorem~\ref{thm:poset:deg:main}.}
\label{fig:hypersurface}
\end{figure}

\begin{example}\label{ex:hypersurface}
  We first consider an example involving hypersurface rings over
  $\Bbbk[x,y]$. Let $f\in \Bbbk[x,y]$ be a quadric polynomial, and set
  $R:=\Bbbk[x,y]/\<f\>$.  The cone of Betti diagrams over $R$ is
  described in detail in \cite{bbeg}.  The extremal rays still
  correspond to Cohen--Macaulay modules with pure resolutions, though
  some of the degrees are infinite in length.
  
  \begin{enumerate}[(i)]
  \item \emph{Finite pure resolutions.}  For example, if $h$ is a
    degree $7$ polynomial that is not divisible by $f$, then the free
    resolution of $R/\<h\>$ is \[ R\leftarrow R(-7) \leftarrow 0.\]
    Following the notation of Section~\ref{sec:prelim deg}, we denote
    such a resolution by its corresponding degree sequence, i.e.,
    $(0,7,\infty,\infty,\dots)$.
\item 
  \emph{Infinite pure resolutions.}  
  For example, the free resolution of the $R$-module $R/\<x,y\>$ is
  \[
  R\leftarrow R^2(-1)\leftarrow R^2(-2)\leftarrow R^2(-3)\leftarrow
  \cdots.
  \]
  We denote this by its corresponding degree sequence, i.e.,
  $(0,1,2,3,\dots)$.
\end{enumerate}

There are two possible partial orders for these extremal rays:
\begin{itemize}
\item $\rho_d\preceq \rho_{d'}$ if $d_i\leq d_i'$ for all $i$.
\item $\rho_d\preceq\rho_{d'}$ if there exist Cohen--Macaulay
  $R$-modules $M$ and $M'$ with pure resolutions of types $d$ and
  $d'$, respectively, with $\Hom_R(M',M)_{\leq 0}\ne 0$.
\end{itemize}
In contrast with the case of the polynomial ring, these partial orders are genuinely different.  Only the second partial order leads to a greedy algorithm for decomposing Betti diagrams over $R$, in parallel to~\cite{EiScConjOfBS07}*{Decomposition Algorithm}.  This also provides an analog of the Multiplicity Conjecture for $R$.
\end{example}

\begin{example}\label{ex:bigraded}
  We now consider $S=\Bbbk[x,y]$ with the $\ZZ^2$-grading $\deg(x):=(1,0)$
  and $\deg(y):=(0,1)$.  In general, the cone of bigraded Betti diagrams 
  over $S$ remains poorly understood.  However, 
  portions of this cone have been worked out by the first
  three authors, and we now provide a brief sketch of these unpublished results.

  We restrict attention to the cone of Betti diagrams of finite length
  $S$-modules $M$, where all of the Betti numbers of $M$ are
  concentrated in bidegrees $(a,b)$ with $0\leq a,b\leq 2$.  The
  extremal rays of this cone may be realized by quotients of monomial
  ideals of the form $m_1/m_2$, where each $m_i$ is a monomial ideal
  generated by monomials of the form $x^\ell y^k$ with $0\leq
  \ell,k\leq 2$.  
  The natural analog of Theorem~\ref{thm:poset:deg:main} induces 
  a partial order on these rays, which also induces a simplicial structure 
  on this cone of bigraded Betti diagrams.
\end{example}
%\bigskip
%%%%%%%%%%%%%%%%%%%%%%%%%%%%%%%%%%%%%%%
%%%%%%%%%%%%%%%%%%%%%%%%%%%%%%%%%%%%%%%
%%%%%%%%%%%%%%%%%%%%%%%%%%%%%%%%%%%%%%%
\def\cfudot#1{\ifmmode\setbox7\hbox{$\accent"5E#1$}\else
  \setbox7\hbox{\accent"5E#1}\penalty 10000\relax\fi\raise 1\ht7
  \hbox{\raise.1ex\hbox to 1\wd7{\hss.\hss}}\penalty 10000 \hskip-1\wd7\penalty
  10000\box7}
% \bib, bibdiv, biblist are defined by the amsrefs package.
\begin{bibdiv}
\begin{biblist}

\bib{bbeg}{article}{
      author={Berkesch, Christine},
      author={Burke, Jesse},
      author={Erman, Dan},
      author={Gibbons, Courtney},
      title={The cone of {B}etti diagrams over a hypersurface ring of low
      embedding dimension},
      note={\tt arXiv:1109.5198v1},
      date={2011},
}

\bib{explicit}{article}{
      author={Berkesch, Christine},
      author={Erman, Dan},
      author={Kummini, Manoj},
      author={Sam, Steven~V},
      title={Tensor complexes: Multilinear free resolutions constructed from higher tensors},
      note={\tt arXiv:1101.4604},
      date={2011},
}

\bib{boij-sod1}{article}{
    AUTHOR = {Boij, Mats},
    AUTHOR = {S{\"o}derberg, Jonas},
     TITLE = {Graded {B}etti numbers of {C}ohen--{M}acaulay modules and the
              multiplicity conjecture},
   JOURNAL = {J. Lond. Math. Soc. (2)},
  %FJOURNAL = {Journal of the London Mathematical Society. Second Series},
    VOLUME = {78},
      YEAR = {2008},
    NUMBER = {1},
     PAGES = {85--106},
      ISSN = {0024-6107},
%  review = {\MR{2427053}},
}

\bib{BoijSoderbergNonCM08}{misc}{
      author={Boij, Mats},
      author={S{\"o}derberg, Jonas},
       title={Betti numbers of graded modules and the multiplicity conjecture
  in the non-{C}ohen--{M}acaulay case},
        date={2008},
        note={\tt arXiv:0803.1645},
}

\bib{BrHe:CM}{book}{
      author={Bruns, Winfried},
      author={Herzog, J{\"u}rgen},
       title={Cohen--{M}acaulay rings},
      series={Cambridge Studies in Advanced Mathematics},
   publisher={Cambridge University Press},
     address={Cambridge},
        date={1993},
      volume={39},
        ISBN={0-521-41068-1},
%      review={\MR{95h:13020}},
}

\bib{EiFlScExterior03}{article}{
      author={Eisenbud, David},
      author={Fl{\o}ystad, Gunnar},
      author={Schreyer, Frank-Olaf},
       title={Sheaf cohomology and free resolutions over exterior algebras},
        date={2003},
        ISSN={0002-9947},
     journal={Trans. Amer. Math. Soc.},
      volume={355},
      number={11},
       pages={4397\ndash 4426 (electronic)},
         url={http://dx.doi.org/10.1090/S0002-9947-03-03291-4},
%      review={\MR{MR1990756 (2004f:14031)}},
}

\bib{efw}{article}{
  author={Eisenbud, David},
  author={Fl\o ystad, Gunnar},
  author={Weyman, Jerzy},
  title={The existence of pure free resolutions},
  journal={Ann. Inst. Fourier (Grenoble)},
  date={2011},
  volume={61},
  number={3},
  pages={905\ndash 926},
}

\bib{EiScConjOfBS07}{article}{
      author={Eisenbud, David},
      author={Schreyer, Frank-Olaf},
       title={Betti numbers of graded modules and cohomology of vector
  bundles},
        date={2009},
        ISSN={0894-0347},
     journal={J. Amer. Math. Soc.},
      volume={22},
      number={3},
       pages={859\ndash 888},
%      review={\MR{MR2505303}},
}

\bib{EiScSupNat09}{article}{
      author={Eisenbud, David},
      author={Schreyer, Frank-Olaf},
      title={Cohomology of coherent sheaves and series of supernatural bundles},
      journal={J. Eur. Math. Soc. (JEMS)},
      volume={12},
      date={2010},
      number={3},
      pages={703--722},
      issn={1435-9855},
%      review={\MR{2639316}},
%      doi={10.4171/JEMS/212},
%      note={\tt arXiv:0902.1594},
}

\bib{ES:ICMsurvey}{inproceedings}{
      author={Eisenbud, David},
      author={Schreyer, Frank-Olaf},
       title={Betti numbers of syzygies and cohomology of coherent sheaves},
        date={2010},
   booktitle={Proceedings of the {I}nternational {C}ongress of
  {M}athematicians},
        note={Hyderabad, India},
}

\bib{fulton}{book}{
   author={Fulton, William},
   title={Young tableaux, with applications to representation theory
     and geometry}, 
   series={London Mathematical Society Student Texts},
   volume={35},
   publisher={Cambridge University Press},
   place={Cambridge},
   date={1997},
   pages={x+260},
   isbn={0-521-56144-2},
   isbn={0-521-56724-6},
%   review={\MR{1464693 (99f:05119)}},
}

\bib{M2}{misc}{
    label={M2},
    author={Grayson, Daniel~R.},
    author={Stillman, Michael~E.},
    title = {Macaulay 2, a software system for research
	    in algebraic geometry},
    note = {Available at \url{http://www.math.uiuc.edu/Macaulay2/}},
}

\bib{sam}{article}{
author={Sam, Steven V},
author={Weyman, Jerzy},
title={Pieri resolutions for classical groups},
journal={J. Algebra},
volume={329},
date={2011},
pages={222--259},
}

\bib{weyman}{book}{
   author={Weyman, Jerzy},
   title={Cohomology of vector bundles and syzygies},
   series={Cambridge Tracts in Mathematics},
   volume={149},
   publisher={Cambridge University Press},
   place={Cambridge},
   date={2003},
   pages={xiv+371},
   isbn={0-521-62197-6},
%   review={\MR{1988690 (2004d:13020)}},
%   doi={10.1017/CBO9780511546556},
}

\end{biblist}
\end{bibdiv}
\bigskip

%%%%%%%%%%%%%%%%%%%%%%%%%%%%%%%%%%%%%%%
\end{document}